\DeclareMathOperator{\ob}{{\rm ob\:}}
\DeclareMathOperator{\op}{{\rm op}}
\DeclareMathOperator{\TF}{{\rm TF}}
\DeclareMathOperator{\Tr}{{\rm Tr}}
\DeclareMathOperator{\Lie}{{\rm Lie}}
\DeclareMathOperator{\colim}{{\rm colim}}
\DeclareMathOperator{\Vect}{{\rm Vect}}
\DeclareMathOperator{\id}{{\rm id}}
\DeclareMathOperator{\Comm}{{\rm Comm}}
\DeclareMathOperator{\free}{{\rm free}}
\DeclareMathOperator{\Lan}{{\rm Lan}}
\DeclareMathOperator{\mor}{{\rm mor}}
\DeclareMathOperator{\cod}{{\rm cod}}
\DeclareMathOperator{\dom}{{\rm dom}}
\DeclareMathOperator{\disc}{{\rm disc}}
\DeclareMathOperator{\Monoids}{{\rm Monoids}}
\DeclareMathOperator{\Alg}{{\rm Alg}}
\theoremstyle{plain}
\newtheorem{prop}{Proposition}[section]
\newtheorem{lemma}[prop]{Lemma}
\newtheorem{theorem}[prop]{Theorem}
\newtheorem{corollary}[prop]{Corollary}
\newtheorem{question}[prop]{Question}
\theoremstyle{definition}
\newtheorem{definition-proposition}[prop]{Definition-Proposition}
\newtheorem{definition}[prop]{Definition}
\newtheorem{remark}[prop]{Remark}
\newtheorem{example}[prop]{Example}
\newtheorem{examples}[prop]{Examples}
\newtheorem{observation}[prop]{Observation}
\DeclareMathOperator{\grouplikes}{{\rm \Gamma}}
\DeclareMathOperator{\Susp}{{\rm Susp}}
\DeclareMathOperator{\TSusp}{{\rm TSusp}}
\DeclareMathOperator{\triv}{{\rm triv}}
\DeclareMathOperator{\forget}{{\rm forg}}
\DeclareMathOperator{\GPGen}{{\rm GPGen}}
\DeclareMathOperator{\PrimGen}{{\rm PrimGen}}
\DeclareMathOperator{\Hopf}{{\rm Hopf}}
\DeclareMathOperator{\Tor}{{\rm Tor}}
\DeclareMathOperator{\Ext}{{\rm Ext}}
\DeclareMathOperator{\Cocomm}{{\rm Cocomm}}
\DeclareMathOperator{\Rig}{{\rm Rig}}
\DeclareMathOperator{\Bialg}{{\rm Bialg}}
\DeclareMathOperator{\LeftSided}{{\rm LeftSided}}
\newcommand{\tred}[1]{\textcolor{red}{#1}}
\newcommand{\C}{\mathbb{C}}
\newcommand{\N}{\mathbb{N}}
\newcommand{\R}{\mathbb{R}}
\newcommand{\rtens}{\ {\color{red}\otimes} \ }
\newcommand{\rtensk}{\ {\color{red}\otimes}_k \ }
\newcommand{\btens}{\ {\color{blue}\otimes} \ }
\newcommand{\gtens}{\ {\color{green}\otimes} \ }
\newcommand{\gtensk}{\ {\color{green}\otimes}_k \ }
\newcommand{\LieFilt}{{}^{I}\!E^0}
\newcommand{\SLieFilt}{{}^{I}\!\tilde{E}^0}
\renewcommand{\epsilon}{\varepsilon}
\title{Milnor-Moore theorems for bialgebras in characteristic zero.}
\author{Joey Beauvais-Feisthauer, Yatin Patel, Andrew Salch}
\email{joeybf@wayne.edu}
\email{yatin@wayne.edu}
\email{asalch@wayne.edu}
\address{Department of Mathematics, Wayne State University, 656 W. Kirby,  Detroit, MI 48202}
\begin{document}

\begin{abstract}
Over fields of characteristic zero, we construct equivalences between certain categories of bialgebras which are generated by grouplikes and generalized primitives, and certain categories of structured Lie algebras. The relevant families of bialgebras include many which are not connected, and which fail to admit antipodes.
\end{abstract}

\maketitle
\tableofcontents

\section{Introduction}

\subsection{The Milnor-Moore theorem, and some generalizations.}

Let $k$ be a field of characteristic zero. The characteristic zero case of the classical theorem of Milnor and Moore \cite{MR0174052} tells us that the category of Lie $k$-algebras is equivalent to the category of primitively-generated Hopf $k$-algebras. 
The equivalence of categories is given by the two functors
\begin{align*}
 P: \PrimGen\Hopf\Alg(k) & \rightarrow \Lie(k) \\
 U: \Lie(k) &\rightarrow \PrimGen\Hopf\Alg(k),
\end{align*}
where $P$ sends a Hopf algebra to its Lie algebra of primitives, and where $U$ sends a Lie algebra to its universal enveloping algebra. 
While many generalizations of this theorem can be found in the literature\footnote{See for example \cite{MR2746042}, \cite{MR3438319}, \cite{MR1879927}, \cite{MR1860997},  \cite{MR2504663}, \cite{MR1813766}, and \cite{MR1927436}. Some of these references are written in terms of bialgebras or generalizations of bialgebras, but as far as we have been able to determine, these and all other currently-available generalizations of the Milnor-Moore theorem make an assumption of one kind or another which amounts, in the classical setting of {\em bialgebras} (rather than an abstract generalization of bialgebras in some categorical setting), to connectedness, hence admitting an antipode. Connectedness assumptions are quite reasonable if one is already willing to assume the existence of an antipode, since in the presence of an antipode, the connected part and the grouplikes can be treated separately. In the bialgebra setting, no such decomposition is available, so the grouplikes and primitives have to be treated in a ``mixed'' way, as the reader can see from the methods developed in this paper.}, we have not been able to find any generalizations to bialgebras which are not necessarily connected, and which do not necessarily admit antipodes, i.e., bialgebras which are not necessarily Hopf algebras. The purpose of this paper is to formulate and prove several such generalizations. We are particularly focused on the bialgebras with non-invertible grouplike elements. Such bialgebras necessarily cannot admit antipodes. While we think these generalizations of the Milnor-Moore theorem are of some interest for purely algebraic reasons, we also have some motivation from examples in algebraic topology; see the end of this introduction for the relevance to topology.

It is clear that a sufficiently na\"{i}ve attempt to generalize the Milnor-Moore theorem to bialgebras cannot possibly succeed. 
If we are to generalize the Milnor-Moore theorem to bialgebras, we must decide what category appears on the ``Lie side'' of the correspondence, and what category appears on the ``bialgebra side'' of the correspondence. Primitively-generated bialgebras, however, are Hopf algebras; see \cref{Injectivity and...}, below, for a proof (which is certainly not new). So the bialgebra side of the equivalence cannot simply be primitively-generated bialgebras.

We propose that the bialgebra side ought to be the {\em generalized-primitively-generated} bialgebras. Recall that a primitive in a bialgebra $A$ is an element $a\in A$ such that $\Delta(a) = a\otimes 1 + 1\otimes a$. Meanwhile, a grouplike in $A$ is an element $a\in A$ such that $\Delta(a) = a\otimes a$ and $\epsilon(a)=1$. If $Q$ is a grouplike element of $A$, we define a {\em $Q$-primitive}\footnote{This is a special case of a ``skew-primitive,'' an element $a$ such that $\Delta(a) = Q\otimes a + a\otimes Q^{\prime}$ for grouplikes $Q,Q^{\prime}$ of $A$. Skew-primitives are well-studied, appearing already in Bourbaki's volume on Lie theory \cite{MR1728312}.} to be an element $a\in A$ such that $Q\otimes a + a\otimes Q$. A {\em generalized primitive} is a linear combination of $Q$-primitives for various grouplikes $Q\in A$. Finally, a bialgebra is {\em generalized-primitively-generated} if it is generated, as an algebra, by grouplikes and generalized primitives\footnote{Generalized-primitively-generated bialgebras are automatically co-commutative; see Proposition \ref{gpg implies cocomm}. So to be generalized-primitively-generated is equivalent to being co-commutative and generated by grouplikes and skew-primitives. The condition of being generated by grouplikes and skew-primitives is a natural one, due to substantial interest in the Andruskiewitsch-Schneider conjecture \cite{MR1780094}, i.e., the conjecture that every finite-dimensional Hopf algebra over an algebraically closed field of characteristic zero is generated by grouplikes and skew-primitives.}. 

Now recall that, for a bialgebra $A$, the set of grouplikes $\grouplikes(A)$ of $A$ forms a monoid under multiplication. 
When we turn to the Lie side of our equivalence of categories, we will find that the relevant type of extra structure on the Lie algebras will depend on the structure of the monoid $\grouplikes(A)$. So it is convenient, on the bialgebra side of our equivalence of categories, to consider not only the generalized-primitively-generated bialgebras. Instead, we fix a commutative monoid $G$, and we want to work with bialgebras $A$ which are generalized-primitively-generated {\em and equipped with a choice of isomorphism $\Gamma(A)\cong G$}. We call bialgebras with such a choice of isomorphism {\em $G$-rigid bialgebras}; see Definition \ref{def of rigid bialgebra} for the full definition\footnote{To be clear, the definition of a $G$-rigid bialgebra includes the condition that the grouplikes $\Gamma(A)$ of $A$ are contained in the center of $A$. Bialgebras with noncentral grouplikes do not lie within the scope of the Milnor-Moore-type theorems proven in this paper.}.

Now we turn to the Lie side of our equivalence of categories. Fixing a commutative monoid $G$, we find that the set of generalized primitives of a $G$-rigid $k$-bialgebra $A$ is not only a Lie algebra, but in fact a {\em $G$-suspensive Lie $k$-algebra}. A $G$-suspensive Lie $k$-algebra, defined in Definition \ref{def of suspensive lie algebras}, is a $G$-graded Lie $k$-algebra which is furthermore equipped with an action of $G$ which respects the $G$-grading and such that the Lie bracket is $kG$-bilinear. \Cref{G-suspensive vector spaces} has some general remarks and discussion on the subject of this kind of ``suspensive algebra,'' i.e., familiar algebraic gadgets like Lie algebras but which are equipped with both a $G$-grading and a $G$-action. The idea of ``suspensive algebra'' is elementary, but we do not know of anywhere where it has been examined before.

In Definition-Proposition \ref{def of W}, we define the {\em universal $G$-rigid enveloping algebra} $WL$ of a $G$-suspensive Lie algebra $L$. In Proposition \ref{W and GP adjunction}, we find that 
\begin{align*}
W: \Susp_G\Lie(k) &\rightarrow \Cocomm\Rig_G\Bialg(k)
\end{align*}
 is left adjoint to the functor
\begin{align*}
GP_*: \Cocomm\Rig_G\Bialg(k) &\rightarrow\Susp_G\Lie(k)
\end{align*}
that sends a cocommutative $G$-rigid $k$-bialgebra to its $G$-suspensive Lie $k$-algebra of generalized primitives. Our first generalization of the characteristic zero Milnor-Moore theorem is Theorem \ref{mm thm char 0}, which states that $W$ and $GP$ restrict to an equivalence of categories between 
\begin{itemize}
\item {\em torsion-free} $G$-suspensive Lie $k$-algebras, and
\item {\em torsion-free} generalized-primitively-generated $G$-rigid $k$-bialgebras.
\end{itemize}
A $G$-suspensive Lie $k$-algebra $L$ is {\em torsion-free} if, for each nonzero homogeneous element $x\in L$ of degree $Q\in G$, we have $Q\cdot x\neq 0$. Similarly, a $G$-rigid bialgebra $A$ is {\em torsion-free} if its $G$-suspensive Lie algebra of generalized primitives is torsion-free. This curious kind of ``torsion'' is only defined because $G$-suspensive Lie algebras are equipped with {\em both} a $G$-grading and a $G$-action. To be clear, when $G$ is not a group, not every torsion-free $G$-suspensive Lie $k$-algebra is free (or projective) as a $kG$-module, so $G$-suspensive Lie algebras are not a special case of Lie $R$-algebras projective over $R$, as studied already in \cite{MR0174052}.

If the commutative monoid $G$ {\em is} a group, then the whole theory reduces to the well-known classical one: every $G$-suspensive Lie algebra is torsion-free, and furthermore Theorem \ref{mm thm char 0} is a case of the classical Milnor-Moore theorem. See Corollary \ref{tf corollary} for discussion. We emphasize that the results in this paper, and in particular Theorem \ref{mm thm char 0}, are only of consequence in the case when the monoid $G$ has noninvertible elements, i.e., the case of bialgebras with some noninvertible grouplikes.

Theorem \ref{mm thm char 0} does require the torsion-freeness hypothesis, as we explain with an explicit example in Example \ref{non-tf example}. In \cref{Suspensive mixed algebras.} we explain what would be necessary to formulate a {\em completely} general Milnor-Moore theorem for bialgebras, one that drops the torsion-freeness hypothesis. We explain that, in order for the ``bialgebra side'' of the equivalence to be {\em all} generalized-primitively-generated $G$-rigid bialgebras, the ``Lie side'' would have to be $G$-suspensive Lie algebras equipped with a truly ponderous structure, an $n$-ary product for each $n=2, 3, \dots$, satisfying some inconvenient associativity conditions as well as conditions enforcing compatibility with the Lie bracket and the suspensive structure. This structure is truly burdensome to work with, and we regard such a wide generalization of the Milnor-Moore theorem as unfruitful: one might as well just work with the bialgebras, if the Lie side of the equivalence is so complicated.

Nevertheless, there are other classes of bialgebras and suspensive Lie algebras, besides the torsion-free ones, for which a Milnor-Moore-type equivalence holds. In Theorem \ref{mm thm for left-sided bialgs} we prove a Milnor-Moore-type theorem for {\em torsion} $G$-suspensive Lie algebras, i.e., $G$-suspensive Lie algebras $L$ in which $Qx=0$ for all homogeneous $x\in L$ in degree $Q\in G$. Such suspensive Lie algebras are at the opposite extreme from those considered in Theorem \ref{mm thm char 0}, which were torsion-{\em free}. Theorem \ref{mm thm for left-sided bialgs} establishes that, when the commutative monoid $G$ is linear\footnote{Linearity of $G$ is defined in Definition \ref{def of linear}. The motivating example of a linear commutative monoid $G$ is the free commutative monoid $\mathbb{N}$, since this case appears prominently in motivating topological examples, like the Dyer-Lashof algebra.}, the category of torsion $G$-suspensive Lie $k$-algebras is equivalent to the category of {\em left-sided} generalized-primitively-generated $G$-rigid $k$-bialgebras. A $G$-rigid $k$-bialgebra $A$ is {\em left-sided} if, for all $Q,Q^{\prime}\in G$ such that $Q$ divides $Q^{\prime}$, for all $x\in GP_Q(A)$ and all $y\in GP_{Q^{\prime}}(A)$ we have $xy=0$. Left-sidedness is a very strong condition, but satisfied by the associated graded bialgebra of a natural filtration on the Dyer-Lashof algebra, as explained in Proposition \ref{dyer-lashof algebra is left-sided}.

Throughout, we work over a field of characteristic zero. Presumably it is possible to generalize the results in this paper to fields of positive characteristic, by keeping track of a restriction map on the ``Lie side'' of each equivalence. We have found ourselves already with enough to say in characteristic zero that we chose not to pursue the positive-characteristic case in this paper.

\subsection{Topological examples and motivations.}

There are two topological sources of non-Hopf bialgebras which motivated the authors to consider Milnor-Moore-like theorems for bialgebras:
\begin{enumerate}
\item For each prime number $p$, the $p$-primary {\em Dyer-Lashof algebra}, written $R$, is an $\mathbb{N}$-graded co-commutative $\mathbb{F}_p$-bialgebra of operations on the mod $p$ homology of infinite loop spaces. (There is a different Dyer-Lashof algebra $R$ for each prime $p$, but the choice of prime $p$ is suppressed from the notation.) The degree zero subring of $R$ is isomorphic to $\mathbb{F}_p[Q^0]$ with $Q^0$ a noninvertible grouplike element, so the bialgebra $R$ clearly cannot admit an antipode. See the appendix to this paper 
for a review of basic properties of $R$, including generators and relations.

Basterra \cite{MR1732625} and Miller \cite{MR526673} have constructed spectral sequences whose input involves $\Tor$ and $\Ext$ groups over the Dyer-Lashof algebra. Basterra's spectral sequences, in particular, are one of very few available tools for the calculation of topological Andr\'e-Quillen cohomology groups of commutative ring spectra. While one can use Priddy's Koszul duality from \cite{MR265437} to obtain a complete description of $\Ext_R^{*,*}(\mathbb{F}_p,\mathbb{F}_p)$ (at $p=2$, in \cite{MR526673}, and all primes $p$ in \cite{MR711048}), nevertheless one may hope to arrive a new (hopefully useful!) perspective on the homological algebra of $R$-modules by understanding $R$ in terms of Lie algebras. 

Recall that, for each prime $p$, the Steenrod algebra $A$ is a co-commutative Hopf $\mathbb{F}_p$-algebra. It is not the case that $A$ is primitively-generated. In May's thesis \cite{MR2614527}, May filters $A$ so that its associated graded Hopf algebra $E_0A$ {\em is} primitively-generated. Consequently May gets a spectral sequence whose input is $\Ext$ over $E_0A$, and whose output is $\Ext$ over $A$, and since $E_0A$ is primitively-generated, May is able to use techniques from Lie algebra cohomology\footnote{The Lie-algebra-theoretic content of May's thesis is glossed over in some treatments, such as \cite{MR860042}, where the focus is on using May's spectral sequence for explicit calculations of relevance for topology. See \cite{MR0193126} and \cite{MR0185595} for published accounts of the Lie-algebra-theoretic techniques and results from May's thesis.} to understand and calculate $\Ext_{E_0A}$. 

Similarly, while the Dyer-Lashof algebra $R$ is not {\em generalized}-primitively-generated, if we filter it by powers of the ideal consisting of all elements in positive degrees, its associated graded bialgebra {\em is} generalized-primitively-generated\footnote{Note that, unlike the Steenrod algebra, the augmentation ideal of $R$ is not the ideal of all elements in positive degrees. For example, $1-Q^0$ is in the augmentation ideal, but also in degree zero.}. In fact $E_0R$ is, at $p>2$, a left-sided $\mathbb{N}$-rigid $\mathbb{F}_p$-bialgebra, so our Milnor-Moore-type theorem for left-sided rigid bialgebras, Theorem \ref{mm thm for left-sided bialgs}, {\em nearly} applies to $E_0R$. The only trouble is that Theorem \ref{mm thm for left-sided bialgs} is for fields of characteristic zero! If the Lie-algebra-theoretic methods in this paper can indeed shed any light on homological questions about the Dyer-Lashof algebra, it will have to wait until positive-characteristic versions of our theorems are proven.
\item
We now sketch one more topological application of the ideas in this paper.
Given a path-connected homotopy-associative unital $H$-space $X$, a classical result of Cartan and Serre \cite{MR46045} identifies the primitives in the rational homology $H_*(X; \mathbb{Q})$ of $X$ as the image of the rational Hurewicz map $\pi_*(X)\otimes_{\mathbb{Z}}\mathbb{Q}\rightarrow H_*(X; \mathbb{Q})$. The Lie bracket of primitives in $H_*(X; \mathbb{Q})$ then agrees with the Samelson product on homotopy groups. 
A classical application of the characteristic zero Milnor-Moore theorem then proves:
\begin{theorem} \label{mm for h-spaces 1} {\bf (Milnor-Moore; see appendix of \cite{MR0174052}.)} If $X$ is a path-connected homotopy-associative unital $H$-space, then the rational homology of $X$ is isomorphic to the universal enveloping algebra of the rational homotopy $\pi_*(X)\otimes_{\mathbb{Z}}\mathbb{Q}$, regarded as a Lie algebra via the Samelson product. 
\end{theorem}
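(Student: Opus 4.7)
The plan is to reduce the statement to the classical characteristic zero Milnor-Moore theorem, which applies to connected cocommutative Hopf algebras, after first establishing that $H_*(X;\mathbb{Q})$ has this structure and then identifying its primitives with rational homotopy. First I would verify that $H_*(X;\mathbb{Q})$ is a connected cocommutative graded Hopf $\mathbb{Q}$-algebra. The Pontryagin product coming from the H-space multiplication $m\colon X\times X\to X$ is associative up to homotopy, which suffices for associativity on homology. The coproduct induced by the diagonal $\Delta\colon X\to X\times X$ is cocommutative, because the composition of $\Delta$ with the flip map equals $\Delta$ on the nose. Path-connectedness of $X$ yields $H_0(X;\mathbb{Q})\cong\mathbb{Q}$, so this graded bialgebra is connected, and connectedness together with the grading forces the existence of an antipode by the standard inductive construction.

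Next I would invoke the classical Milnor-Moore theorem in characteristic zero: any connected cocommutative graded Hopf $\mathbb{Q}$-algebra is canonically isomorphic, as a Hopf algebra, to the universal enveloping algebra $U(P)$ of its Lie algebra of primitives $P$. Applied to $A = H_*(X;\mathbb{Q})$, this gives an isomorphism $U(P(H_*(X;\mathbb{Q})))\xrightarrow{\cong} H_*(X;\mathbb{Q})$ of Hopf algebras, where the Lie bracket on $P(H_*(X;\mathbb{Q}))$ is the graded commutator of the Pontryagin product.

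The third step is to identify $P(H_*(X;\mathbb{Q}))$ with $\pi_*(X)\otimes_{\mathbb{Z}}\mathbb{Q}$ via the Hurewicz map. The Cartan-Serre theorem quoted in the paragraph preceding the statement says precisely that the rational Hurewicz map $h\colon\pi_*(X)\otimes_{\mathbb{Z}}\mathbb{Q}\to H_*(X;\mathbb{Q})$ has image equal to the primitives; since the kernel of the integral Hurewicz map on an H-space is torsion (or, more concretely, by general rational homotopy theory for H-spaces), $h$ is injective after tensoring with $\mathbb{Q}$. Thus $h$ furnishes an isomorphism of graded $\mathbb{Q}$-vector spaces $\pi_*(X)\otimes_{\mathbb{Z}}\mathbb{Q}\xrightarrow{\cong}P(H_*(X;\mathbb{Q}))$.

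Finally I would verify that $h$ is a Lie algebra map when $\pi_*(X)\otimes\mathbb{Q}$ carries the Samelson bracket. The Samelson product of $\alpha\colon S^p\to X$ and $\beta\colon S^q\to X$ is represented by the commutator map $S^p\wedge S^q\to X$, $(x,y)\mapsto m(\alpha(x),\beta(y))\cdot m(\beta(y),\alpha(x))^{-1}$, and by naturality of the Hurewicz homomorphism together with the Eilenberg-Zilber theorem, $h$ carries this to the graded commutator of $h(\alpha)$ and $h(\beta)$ in the Pontryagin ring. This is the main obstacle in the proof and is where one must keep careful track of Koszul signs and of the fact that the H-space need not admit strict inverses, so that one must work with the homotopy-theoretic commutator and check independence of choices up to homotopy. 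Once this compatibility of brackets is established, the composite $U(\pi_*(X)\otimes\mathbb{Q})\xrightarrow{U(h)}U(P(H_*(X;\mathbb{Q})))\xrightarrow{\cong}H_*(X;\mathbb{Q})$ is the asserted isomorphism.
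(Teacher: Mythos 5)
Your proposal is correct and follows essentially the same route the paper indicates: it invokes the Cartan--Serre theorem to identify the primitives of $H_*(X;\mathbb{Q})$ with the image of the rational Hurewicz map (compatibly with the Samelson bracket) and then applies the classical characteristic-zero Milnor--Moore theorem for connected cocommutative Hopf algebras. The paper itself only sketches this in the preceding paragraph and defers the details to the appendix of Milnor--Moore, so your write-up is just a fleshed-out version of the intended argument.
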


It is natural to ask how to generalize Theorem \ref{mm for h-spaces 1} to handle a homotopy-associative $H$-space $X$ which is {\em not path-connected.} Here are some of the obstacles to formulating such a generalization:
\begin{description}
\item[Algebraic obstacle] If $X$ is not path-connected, then $H_0(X; \mathbb{Q})$ will typically have nontrivial grouplikes. Then $H_*(X; \mathbb{Q})$ will not be primitively-generated, so the classical characteristic zero Milnor-Moore theorem cannot offer a complete description of $H_*(X; \mathbb{Q})$ in terms of some kind of Lie-algebraic data. We might deal with the grouplikes in $H_0(X;\mathbb{Q})$ by using the Cartier-Gabriel-Kostant-Milnor-Moore theorem which identifies every cocommutative Hopf algebra over an algebraically closed field of characteristic zero as the twisted tensor product of a group algebra and the universal enveloping algebra of some Lie algebra. However, this requires the Hopf algebra to indeed be a {\em Hopf} algebra, not just a bialgebra: the antipode plays an essential role in the proof of the Cartier-Gabriel-Kostant-Milnor-Moore theorem. 

Our point is that, when $X$ is a non-connected homotopy-associative unital $H$-space but not an $H$-group, we really need some analogue of the characteristic zero Milnor-Moore theorem which applies to bialgebras without antipode. Such a theorem is the main result of this paper.
\item[Topological obstacle] In the proof of the Cartan-Serre theorem on primitives in $H_*(X;\mathbb{Q})$, an essential role is played by the fact that the rational $k$-invariants of a path-connected homotopy-associative unital $H$-space $X$ are all trivial, so the rational homotopy type of $X$ splits as a product of Eilenberg-Mac Lane spaces. See section 9.1 of \cite{MR2884233} for a nice modern exposition. If we lift the hypothesis that $X$ is path-connected, then we are no longer guaranteed to have such a splitting: for example, consider the case where $X$ is the free topological monoid on any space with a rationally nontrivial $k$-invariant. 

Our point is that there is a topological obstacle to identifying $H_*(X; \mathbb{Q})$ in terms of the Lie algebra $\pi_*(X)\otimes_{\mathbb{Z}}\mathbb{Q}$: one ought to formulate and prove some appropriate analogue of the Cartan-Serre theorem on primitives in $H_*(X;\mathbb{Q})$, in the case of $X$ not path connected. 
 Dealing with that obstacle goes beyond the scope of this paper, however.
\end{description}
\end{enumerate}

\subsection{Acknowledgments.} 

This paper has its origins in conversations that A. Salch had with Sean Tilson several years ago, motivated by the desire to better understand the input for Basterra's spectral sequences from \cite{MR1732625}. Salch thanks Tilson for those conversations and for his hospitality during a visit to Wuppertal. 

\section{$Q$-primitives.}

\subsection{Basic definitions.}

Throughout, let $k$ be a field, and let $A$ be a bialgebra over $k$ with coproduct $\Delta: A\rightarrow A\otimes_k A$ and augmentation $\epsilon: A\rightarrow k$. 
\begin{definition}\label{def of grouplike}
An element $Q\in A$ is {\em grouplike} if $\Delta(Q) = Q\otimes Q$ and $\epsilon(Q) = 1$.
\end{definition}
Definition \ref{def of grouplike} is classical and standard, as are the following facts:
\begin{enumerate}
\item The set of grouplike elements of $A$ forms a monoid under multiplication, since $ab$ is grouplike if $a$ and $b$ are each grouplike. 
\item If $A$ is furthermore a Hopf algebra, then the antipode $\chi: A\rightarrow A$ yields an inverse operation on the monoid of grouplike elements of $A$, making that monoid into a group.
\item A cocommutative bialgebra over an algebraically closed field (or, more generally, a pointed cocommutative bialgebra over any field) is a Hopf algebra if and only if its monoid of grouplikes forms a group. See Proposition 9.2.5 of \cite{MR0252485} for this result, and Lemma 8.0.1 of \cite{MR0252485} for a proof that algebraic closure of the ground field implies pointedness of the bialgebra.
\end{enumerate}

There exist non-Hopf bialgebras whose grouplikes are all invertible, hence form a group: see Example 2 in \cite{MR587928} for an example of a cocommutative bialgebra over the real numbers which is not a Hopf algebra, but whose only grouplike is $1$. Of course a non-Hopf cocommutative bialgebra 
cannot be primitively-generated. 

Definition \ref{def of Q-primitives} is not new: it is a natural idea, and it appears, with slightly different terminology and notation, in section 1.1 of chapter II of Bourbaki's book \cite{MR1728312}. However, Bourbaki very quickly restrict their attention to classical primitives, so the more general theory of generalized primitives does not get developed very far in \cite{MR1728312}. 
\begin{definition}\label{def of Q-primitives}\leavevmode
\begin{itemize}
\item We write $\grouplikes(A)$ for the monoid of grouplike elements of $A$.
\item Suppose $Q\in \grouplikes(A)$. An element $a\in A$ is {\em $Q$-primitive} if $\Delta(a) = a\otimes Q + Q\otimes a$.
\item An element $a\in A$ is a {\em homogeneous generalized primitive} if $a$ is $Q$-primitive for some $Q\in \grouplikes(A)$. 
\item An element $a\in A$ is a {\em generalized primitive} if $a$ is a $k$-linear combination of homogeneous generalized primitives. We write $GP_*(A)$ for the $k$-vector space of generalized primitives in $A$.
\item Suppose that the monoid $\grouplikes(A)$ is commutative\footnote{Perhaps this assumption is not really necessary, but if we do not assume it, then we wind up talking about graded vector spaces which are graded by a {\em noncommutative} monoid. That is a pretty exotic kind of grading, so it seems safest to avoid it. The only reason to assume that $\grouplikes(A)$ is commutative here is so that we do not have to talk about $G$-gradings for noncommutative $G$.}. For each $Q\in \grouplikes(A)$, we write $GP_Q(A)$ for the $k$-vector space of $Q$-primitives in $A$. Then \[ GP_*(A) = \bigoplus_{Q\in \grouplikes(A)} GP_Q(A),\] that is, we regard $GP_*(A)$ as a graded $k$-vector space, graded by the monoid $\grouplikes(A)$. 
\end{itemize}
\end{definition}

\subsection{Structure of the set of generalized primitives.}

In general, the commutator of two generalized primitives does not have to be a generalized primitive, so unlike classical primitives, generalized primitives in $A$ are not guaranteed to form a sub-Lie-algebra of $A$ under the commutator bracket! However, under some reasonable hypotheses, we {\em do} get that the commutator of generalized primitives is a generalized primitive, as we prove in Proposition \ref{commutator of gen prims}. The relevant hypotheses are laid out in Definition \ref{def of compatibility}:
\begin{definition}\label{def of compatibility}
We say that the bialgebra $A$ has {\em primitive-grouplike compatibility} if, for every pair of grouplikes $Q,Q^{\prime}$ in $A$, every $Q$-primitive $a\in A$, and every $Q^{\prime}$-primitive $a^{\prime}\in A$, we have:
\begin{equation}\label{pgc eq} 0 =  aQ^{\prime} \otimes Qa^{\prime} + Qa^{\prime}\otimes aQ^{\prime} - a^{\prime}Q \otimes Q^{\prime}a - Q^{\prime}a\otimes a^{\prime}Q \in A\otimes_k A
.\end{equation}
\end{definition}
\begin{example}
In order of increasing generality:
\begin{itemize}
\item Every commutative bialgebra has primitive-grouplike compatibility.
\item If every grouplike in $A$ is contained in the center of $A$, then $A$ has primitive-grouplike compatibility. When the grouplikes of a bialgebra $A$ are contained in the center of $A$, we say that {\em $A$ has central grouplikes}.
\item If every noncentral grouplike $Q$ in $A$ has the property that $Qa = 0$ for every generalized primitive $a\in A$, then $A$ has primitive-grouplike compatibility. We refer to this last condition as {\em strong primitive-grouplike compatibility}.
\end{itemize}

For example, the Dyer-Lashof algebra $R$ does not have central grouplikes\footnote{\Cref{Review of the Dyer-Lashof algebra.} reviews the basic properties of the Dyer-Lashof algebra, in case this may be useful to the reader.}: its noncentral grouplikes are the positive powers of $Q^0$. However, $R$ {\em does} have strong primitive-grouplike compatibility, since its generalized primitives are all in the two-sided ideal of elements in positive degree, and since $Q^0Q^i=0$ and $Q^0\beta Q^i$ for all $i>0$, by the excess relations in $R$.

However, $R$ is not generated by its grouplikes and generalized primitives. For example, the element $Q^2\in R$ is not in the subalgebra of $R$ generated by the grouplikes and generalized primitives. We can fix this situation by filtering $R$ by the $K$-adic filtration, where $K$ is\footnote{For simplicity, the rest of this paragraph is written under the assumption that $p=2$, but an analogous argument also works at odd primes.} the two-sided ideal $K = (Q^1, Q^2, Q^3, \dots)$ of elements in positive degree. With this filtration, the associated graded bialgebra $E_0R$ {\em does} have central grouplikes. This is because, for $n>0$, the Adem relations yield that $Q^nQ^0$ is a sum of products of the form $Q^rQ^s$ for {\em positive} $r,s$. Since $Q^nQ^0$ is a product of an element in $K$-adic filtration $1$ and an element of $K$-adic filtration $0$, while each of the terms $Q^rQ^s$ in the sum has $K$-adic filtration $2$, we get that the product $Q^n\cdot Q^0$ is zero in the associated graded bialgebra $E_0R$, for $n>0$. Meanwhile, $Q^0\cdot Q^n$ is zero already in $R$, by the excess condition. So:
\begin{itemize}
\item  $Q^0x=0=xQ^0$ in $E_0R$ if $x$ is homogeneous and of positive degree, while
\item $Q^0x=xQ^0$ if $x$ is homogeneous of degree $0$, since the degree $0$ subring of $E_0R$ is simply $\mathbb{F}_p[Q^0]$. 
\end{itemize}
Hence $E_0R$ has central grouplikes. 
\end{example}

\begin{prop}\label{commutator of gen prims}
Suppose that $k$ is a field and $A$ is a $k$-bialgebra whose monoid $\grouplikes(A)$ of grouplikes is commutative. 
Then $A$ has primitive-grouplike compatibility if and only if the generalized primitives in $A$ are closed under the commutator bracket.
\end{prop}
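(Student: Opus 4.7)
The plan is to compute $\Delta([a,a'])$ for a $Q$-primitive $a$ and a $Q'$-primitive $a'$ by expanding $\Delta(a)\Delta(a')$ and $\Delta(a')\Delta(a)$ in the tensor product algebra $A\otimes_k A$ and subtracting. Using that $\Delta$ is an algebra homomorphism together with commutativity $QQ'=Q'Q$ of the grouplike monoid, this yields
\[
\Delta([a,a']) \;=\; [a,a']\otimes QQ' + QQ'\otimes [a,a'] + E,
\]
where $E = aQ'\otimes Qa' + Qa'\otimes aQ' - a'Q\otimes Q'a - Q'a\otimes a'Q$ is precisely the right-hand side of the defining equation \eqref{pgc eq} of primitive-grouplike compatibility. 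Since every generalized primitive is by definition a $k$-linear combination of homogeneous ones and since the commutator bracket is $k$-bilinear, it suffices to verify both implications on pairs of homogeneous generalized primitives.

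For the forward direction, assume $A$ has primitive-grouplike compatibility, so $E=0$. Then the identity above reads $\Delta([a,a']) = [a,a']\otimes QQ' + QQ'\otimes [a,a']$, exhibiting $[a,a']$ as a $QQ'$-primitive, and hence a (homogeneous) generalized primitive; extending $k$-bilinearly, all commutators of generalized primitives are generalized primitives.

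For the converse, assume the generalized primitives of $A$ are closed under the commutator bracket. Then $[a,a']\in GP_*(A)$; using the direct-sum decomposition $GP_*(A)=\bigoplus_{R\in\grouplikes(A)} GP_R(A)$ from Definition \ref{def of Q-primitives}, we write $[a,a'] = \sum_R x_R$ uniquely, with $x_R\in GP_R(A)$. Computing $\Delta([a,a'])$ from this expression via $\Delta(x_R) = x_R\otimes R + R\otimes x_R$ and comparing with the identity displayed above gives
\[
E \;=\; \sum_{R\neq QQ'} \bigl(\, x_R\otimes (R-QQ') + (R-QQ')\otimes x_R\,\bigr).
\]
The remaining task is to force the right-hand side to vanish. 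The natural tool is classical linear independence of distinct grouplikes in a bialgebra: choose a vector space complement $U\subseteq A$ to the span $k\grouplikes(A)$, so that the right-hand side lies entirely in $A\otimes k\grouplikes(A) + k\grouplikes(A)\otimes A$. One then isolates appropriate components of the four tensors on the left side, separating out the contributions indexed by distinct grouplikes $R\neq QQ'$, to conclude $x_R = 0$ for every $R\neq QQ'$, whence $E=0$.

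The main obstacle is this final step of the converse: the four summands making up $E$ are not manifestly orthogonal to the relevant subspaces of $A\otimes_k A$, so one has to exploit the explicit form of $E$ together with the fact that any $Q$-primitive automatically lies in the augmentation ideal $\ker(\epsilon)$ (and hence so does each of $aQ'$, $Qa'$, $a'Q$, $Q'a$) in order to effect the separation and conclude that each $x_R$ with $R\neq QQ'$ must vanish.
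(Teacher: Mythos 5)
Your forward direction is correct and coincides with the paper's computation: expanding $\Delta(a)\Delta(a')-\Delta(a')\Delta(a)$ and using $QQ'=Q'Q$ gives $\Delta([a,a'])=[a,a']\otimes QQ'+QQ'\otimes[a,a']+E$ with $E$ equal to the expression in \eqref{pgc eq}, so PGC forces $[a,a']$ to be a $QQ'$-primitive. The problem is the converse, and you flag it yourself: the decisive step, deducing $x_R=0$ for all $R\neq QQ'$ from your displayed identity, is announced as ``the remaining task'' and then labelled ``the main obstacle'' rather than carried out. As written this is a genuine gap, and the two tools you propose do not close it. The fact that $aQ'$, $Qa'$, $a'Q$, $Q'a$ lie in $\ker(\epsilon)$ yields nothing: applying $\epsilon\otimes\id$ or $\id\otimes\epsilon$ to your identity annihilates both sides identically, since $\epsilon$ also kills every $x_R$ and every $R-QQ'$. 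Likewise, a complement $U$ to $k[\grouplikes(A)]$ does not by itself let you ``isolate components,'' because the four tensors constituting $E$ have no a priori relation to that splitting.

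What actually makes the separation work is a stronger fact about $E$ that you did not record: each of $aQ'$, $Qa'$, $a'Q$, $Q'a$ is itself a $QQ'$-primitive (e.g.\ $\Delta(aQ')=(a\otimes Q+Q\otimes a)(Q'\otimes Q')=aQ'\otimes QQ'+QQ'\otimes aQ'$, with commutativity of $\grouplikes(A)$ used for $a'Q$ and $Q'a$), so $E\in GP_{QQ'}(A)\otimes_k GP_{QQ'}(A)$. Now choose a linear functional $f$ on $A$ with $f(R)\neq f(QQ')$, which exists because distinct grouplikes are linearly independent, and apply $\id\otimes f$ to the two expressions for $\Delta([a,a'])$. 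After cancelling you get
\[
\sum_{S}\bigl(f(S)-f(QQ')\bigr)\,x_S\;+\;\sum_{S} f(x_S)\,\bigl(S-QQ'\bigr)\;=\;(\id\otimes f)(E)\;\in\;GP_{QQ'}(A),
\]
whose left-hand side lies in $\bigoplus_{S\neq QQ'}GP_S(A)+k[\grouplikes(A)]$. Granting that $k[\grouplikes(A)]\oplus\bigoplus_S GP_S(A)$ is direct (the directness of $\bigoplus_S GP_S(A)$ is asserted in Definition \ref{def of Q-primitives}, and a generalized primitive lying in the span of the grouplikes is easily seen to vanish), the $GP_R(A)$-component gives $(f(R)-f(QQ'))x_R=0$, hence $x_R=0$, hence $E=0$ and PGC holds. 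For what it is worth, the paper's own proof stops at ``$[a,b]$ is a $Q_aQ_b$-primitive iff \eqref{pgc eq} holds'' and never addresses the possibility that $[a,b]$ is a generalized primitive with components in degrees other than $Q_aQ_b$; you correctly identified that this is the real content of the converse, but your proposal does not supply the argument.
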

\begin{proof}
Suppose that $a,b\in A$, and $Q_a,Q_b$ are grouplikes in $A$, and $a$ is a $Q_a$-primitive and $b$ is a $Q_b$-primitive. Then we have
\begin{align*}
 \Delta\left([a,b]\right)
  &= (a\otimes Q_a + Q_a \otimes a)(b\otimes Q_b + Q_b\otimes b) \\
  & \hspace{10mm}  - (b\otimes Q_b + Q_b\otimes b)(a\otimes Q_a + Q_a \otimes a) \\
  &= aQ_b \otimes Q_ab + Q_ab\otimes aQ_b   - bQ_a \otimes Q_ba - Q_ba\otimes bQ_a \\
  &\hspace{10mm} + [a,b]\otimes Q_aQ_b + Q_aQ_b \otimes [a,b] 
\end{align*}
so $[a,b]$ is a $Q_aQ_b$-primitive if and only if the equation \eqref{pgc eq}, which defines primitive-grouplike compatibility, is satisfied.
\end{proof}

\begin{corollary}
If $A$ has primitive-grouplike compatibility, then $GP_*(A)$ is a sub-Lie-algebra of $A$, regarded as a Lie algebra via the commutator bracket.
Furthermore, $GP_*(A)$ is a $\grouplikes(A)$-graded Lie algebra over $k$. That is, given grouplikes $Q,Q^{\prime}$ in $A$, the Lie bracket of a $Q$-primitive and a $Q^{\prime}$-primitive lies in $GP_{Q\cdot Q^{\prime}}(A)$.
\end{corollary}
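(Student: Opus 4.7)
The plan is to extract the corollary essentially as bookkeeping from the computation already carried out in the proof of Proposition \ref{commutator of gen prims}. The key observation is that the displayed expansion of $\Delta([a,b])$ in that proof does two jobs at once: under the primitive-grouplike compatibility hypothesis, the four terms $aQ_b\otimes Q_ab + Q_ab\otimes aQ_b - bQ_a\otimes Q_ba - Q_ba\otimes bQ_a$ vanish in $A\otimes_k A$, and what remains is precisely $[a,b]\otimes Q_aQ_b + Q_aQ_b\otimes [a,b]$. So the commutator of a $Q_a$-primitive and a $Q_b$-primitive is a $Q_aQ_b$-primitive; this simultaneously gives closure under the bracket and the grading statement.

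First I would record that $GP_*(A)\subseteq A$ is a $k$-linear subspace by Definition \ref{def of Q-primitives}. Next I would show closure of $GP_*(A)$ under the commutator bracket of $A$: by $k$-bilinearity of the bracket, it suffices to check $[a,b]\in GP_*(A)$ for homogeneous generalized primitives $a,b$, which is exactly the forward implication in Proposition \ref{commutator of gen prims}. Then the Lie algebra axioms (antisymmetry and the Jacobi identity) are inherited automatically from the commutator bracket on the associative algebra $A$, so $GP_*(A)$ is a sub-Lie-algebra.

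For the grading, I would then note that the identification $GP_*(A)=\bigoplus_{Q\in\grouplikes(A)} GP_Q(A)$ from Definition \ref{def of Q-primitives} is a direct sum decomposition of $k$-vector spaces indexed by the commutative monoid $\grouplikes(A)$. The computation recalled above shows that the bracket restricts to a map $GP_Q(A)\otimes_k GP_{Q'}(A)\to GP_{Q\cdot Q'}(A)$ for homogeneous $Q,Q'\in\grouplikes(A)$, so the bracket respects the monoid-grading, which is precisely what it means to be a $\grouplikes(A)$-graded Lie algebra.

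There is no real obstacle: the content is already present in Proposition \ref{commutator of gen prims}, and the corollary amounts to repackaging that computation so that it records the degree of $[a,b]$ as well as the mere fact that $[a,b]$ is a generalized primitive. The only thing to be careful about is that the direct sum decomposition is well-defined, i.e., that a nonzero $Q$-primitive cannot also be a $Q'$-primitive for $Q\neq Q'$; this follows from the linear independence of distinct grouplikes in $A\otimes_k A$, since a single element equal to both $a\otimes Q + Q\otimes a$ and $a\otimes Q' + Q'\otimes a$ forces $a=0$ when $Q\neq Q'$.
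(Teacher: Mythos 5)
Your proposal is correct and follows the same route the paper intends: the corollary is stated without proof precisely because it is read off from the displayed computation of $\Delta([a,b])$ in the proof of Proposition \ref{commutator of gen prims}, which under the compatibility hypothesis leaves exactly $[a,b]\otimes Q_aQ_b + Q_aQ_b\otimes [a,b]$, giving both closure under the bracket and the grading $[GP_Q(A),GP_{Q'}(A)]\subseteq GP_{QQ'}(A)$ at once. Your closing remark about directness of the sum is not needed for the corollary (it is already asserted in Definition \ref{def of Q-primitives}), and its one-line justification is a little loose, but this does not affect the argument.
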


Suppose that $A$ has primitive-grouplike compatibility. Then the $\grouplikes(A)$-graded Lie algebra $GP_*(A)$ has additional structure: given an element $Q\in \grouplikes(A)$ and a $Q^{\prime}$-primitive $a\in A$, we have
\begin{align*}
 \Delta(Qa) 
  &= (Q\otimes Q)(Q^{\prime}\otimes a + a \otimes Q^{\prime}) \\
  &= QQ^{\prime}\otimes Qa + Qa \otimes QQ^{\prime},
\end{align*}
that is, $Qa$ is a $QQ^{\prime}$-primitive. So the monoid $\grouplikes(A)$ acts on the left on $GP_*(A)$ by $k$-linear endomorphisms which are grading preserving, in the sense that $Qa\in GP_{QQ^{\prime}}(A)$ if $a\in GP_{Q^{\prime}}(A)$.
For the same reasons, $\grouplikes(A)$ also has a right action on $GP_*(A)$.

If $A$ furthermore has central grouplikes, then the left and the right actions of $\grouplikes(A)$ on $GP_*(A)$ coincide, and we furthermore have
\begin{equation}
 [Qa,b] = Q[a,b] = [a,Qb]
\end{equation}
for any $Q\in \grouplikes(A)$, i.e.,
the Lie bracket on $GP_*(A)$ is $k[\grouplikes(A)]$-bilinear.

Evidently $GP_*(A)$ is endowed with various algebraic structures, more than simply the Lie bracket enjoyed by the classical primitives in $A$. This is especially true when $A$ has central grouplikes. Our next task, accomplished in Definition-Proposition \ref{def of W}, is to say precisely what kind of structured algebraic gadget $GP_*(A)$ is.

\begin{definition}\label{def of suspensive lie algebras}
Let $G$ be a commutative monoid, and let $k$ be a field. We use multiplicative notation for the monoid operation on $G$.
\begin{itemize}
\item By a {\em $G$-graded Lie algebra over $k$} we mean a Lie algebra $L$ over $k$ whose underlying $k$-vector space is equipped with a $G$-grading such that, if $x,y$ are homogeneous elements of $L$ such that $x\in L_m$ and $y\in L_n$, then $[x,y]\in L_{mn}$.
\item By a {\em $G$-suspensive 
Lie algebra over $k$} we mean a $G$-graded Lie algebra $L$ over $k$ equipped with
an action of $G$ on $L$ such that
\begin{itemize}
\item the $G$-action preserves gradings, in the sense that, if $g\in G$ and $x$ is a homogeneous element of $L$ with $x\in L_n$, then $g\cdot x\in L_{g\cdot n}$; and
\item the Lie bracket is $kG$-bilinear.
\end{itemize}
A {\em homomorphism of $G$-suspensive Lie algebras over $k$} is a homomorphism of the underlying Lie algebras which preserves the grading and commutes with the $G$-action. We denote the resulting category of $G$-suspensive Lie $k$-algebras by $\Susp_G\Lie(k)$.
\end{itemize}
\end{definition}
When $G$ fails to have inverses, then $G$-suspensive Lie algebras are much more interesting than when $G$ is a group. In particular, there are many examples where the $G$-action is far from free. Consequently the results of \cite{MR0174052} do not straightforwardly apply to such examples.

If $A$ is a $k$-bialgebra with central grouplikes, then the generalized primitives in $A$ form a $\Gamma(A)$-suspensive Lie $k$-algebra. A nice puzzle for the interested reader is to decide whether there is any further structure enjoyed by the generalized primitives in $A$. In other words:
\begin{question}\label{question 1}
The set of generalized primitives in $A$ has a $k$-vector space structure, a Lie bracket, a $G$-grading, and a $G$-action. Is there any other natural structure on the set of generalized primitives in $A$? 
\end{question}
In Corollary \ref{tf corollary} we find that the answer to Question \ref{question 1} turns out be ``no'' under certain hypotheses. These hypotheses are automatically satisfied if the monoid $G$ is a group, for example. However, in \cref{Suspensive mixed algebras.} we find that, if $G$ has noninvertible elements, then the answer to Question \ref{question 1} is ``yes'': the generalized primitives in $A$ form a very richly structured algebraic gadget. See \cref{Suspensive mixed algebras.} for discussion.

\begin{definition-proposition} \label{def of W} 
Let $k$ be a field. Let $G$ be a commutative monoid, and let $L$ be a $G$-suspensive Lie algebra over $k$. Let $W(L)$ be the $k$-bialgebra defined as follows. 
As a $k$-algebra,
\[ W(L) = \left( kG\otimes_k T(L)\right)/\left( 1\otimes (\ell\ell^{\prime} - \ell^{\prime}\ell) - 1\otimes [\ell,\ell^{\prime}],  g\otimes \ell - 1\otimes (g\cdot \ell)\right),\]
where $T(L)$ is the tensor $k$-algebra (i.e., free associative $k$-algebra) on $L$. 



In order to define the coproduct on $W(L)$, it is convenient to introduce a notational novelty: we will write $\odot$ for the {\em internal} tensor product in $W(L)$, and we will write $\otimes$ for the {\em external} tensor product in $W(L)\otimes_k W(L)$. So, to specify an element of $W(L)$, we can give a $k$-linear combination of elements of the form $g\odot \ell$, and to specify an element of $W(L)\otimes_k W(L)$, we can give a $k$-linear combination of elements of the form $\left( g\odot \ell\right) \otimes \left( g^{\prime}\odot \ell^{\prime}\right).$
With this notation in place, the coproduct on $W(L)$ is given by
\begin{align*}
  \Delta(g \odot 1) &= (g\odot 1) \otimes (g \odot 1), & g \in G, \\
  \Delta(1 \odot \ell) &= (1 \odot \ell) \otimes (Q \odot 1) + (Q \odot 1) \otimes (1 \odot \ell), & \ell \in L_Q.
\end{align*}
The augmentation on $W(L)$ is given by $\epsilon(g\odot 1) = 1$ and $\epsilon(1\odot\ell) = 0$ for all $g\in G$ and $\ell\in L$.

We call $W(L)$ the\footnote{While this definition of a ``universal $G$-rigid enveloping bialgebra'' makes sense without having a definition of a ``$G$-rigid bialgebra'' in general, we do provide a definition of $G$-rigid bialgebras below, in Definition \ref{def of rigid bialgebra}.} {\em universal $G$-rigid enveloping bialgebra of $L$}.
\end{definition-proposition}
\begin{proof}
It is routine to check that $W(L)$ is indeed a bialgebra, as follows. One verifies that $\epsilon$ and $\Delta$ are $k$-algebra morphisms by regarding $W(L)$ as a quotient of $kG$ tensored with the free associative $k$-algebra on $L$, and checking that $\Delta$ and $\epsilon$ each vanish on the relations $1\odot (\ell\ell^{\prime} - \ell^{\prime}\ell) - 1\odot [\ell,\ell^{\prime}]$ and $g\odot \ell - 1\odot (g\cdot \ell)$. Counitality and coassociativity are also straightforwardly verified.
\end{proof}

\begin{example}\label{U to W example}
Suppose that $G$ is the free monoid on a single generator $\sigma$.  
Let $L$ be the abelian $G$-suspensive Lie $k$-algebra consisting of a single copy of $k$ in each degree, with the action of $G$ freely permuting the copies of $k$. Then:
\begin{align*}
  UL &\cong k[x_0, x_1, x_2, \dots ], \\
  WL &\cong k[\sigma, x_0],
\end{align*}
where $x_n$ is the element $1\in k = L^{\sigma^n}$, the degree $\sigma^n$ summand of $L$.
The map 
\begin{align}
\label{U to W map} UL  &\rightarrow WL
\end{align} sends $x_n$ to $\sigma^nx_0$. While $x_n\in UL$ is a primitive for each $n$, its image $\sigma^nx_0$ in $WL$ is a $\sigma^n$-primitive. So, while the natural map \eqref{U to W map} is a $k$-algebra morphism, it does {\em not} preserve the coproduct. The map \eqref{U to W map} is also neither surjective (it does not hit $\sigma$, for example) nor injective.
\end{example}

\subsection{Generalized-primitive generation.}

It is classical that, given a Lie algebra $L$, its universal enveloping algebra $UL$ is primitively-generated. However, if $L$ is a $G$-suspensive Lie algebra, it is almost never the case that $WL$ is primitively-generated. Instead, $WL$ is easily seen to be {\em generalized-primitively-generated}, in the following sense:
\begin{definition}\label{def of gpg}
Let $A$ be a $k$-bialgebra. We say that $A$ is {\em generalized-primitively-generated} if every element of $A$ is a $k$-linear combination of products of grouplikes in $A$ and generalized primitives in $A$.
\end{definition} 
Note that being generalized-primitively-generated is a much weaker condition than being primitively-generated. For example, a bialgebra with no nonzero primitives, {\em and also no nonzero generalized primitives}, can still be generalized-primitively-generated, simply by being generated by grouplikes. This happens for group rings, for example. 

Nevertheless, not all bialgebras are generalized-primitively-generated. For example, non-co-commutative bialgebras cannot be generalized-primitively-generated, as we now show:
\begin{prop}\label{gpg implies cocomm}
If $A$ is a generalized-primitively-generated $k$-bialgebra, then $A$ is co-commutative.
\end{prop}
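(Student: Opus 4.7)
The plan is to exhibit the set $S = \{a \in A : \Delta(a) = \tau \Delta(a)\}$, where $\tau: A \otimes_k A \to A \otimes_k A$ is the swap map, as a subalgebra of $A$ that contains all grouplikes and all generalized primitives; since $A$ is generated as an algebra by grouplikes and generalized primitives, this will force $S = A$, which is co-commutativity.

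First I would observe that grouplikes and generalized primitives lie in $S$ by inspection. If $Q$ is grouplike, then $\Delta(Q) = Q \otimes Q$ is plainly fixed by $\tau$. If $a$ is a $Q$-primitive, then $\Delta(a) = a \otimes Q + Q \otimes a$ is also manifestly fixed by $\tau$. Since $S$ is a $k$-linear subspace of $A$, every generalized primitive (being a $k$-linear combination of $Q$-primitives for varying $Q$) lies in $S$ as well.

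Next I would show that $S$ is a subalgebra of $A$. The cleanest way is to note that both $\Delta: A \to A \otimes_k A$ and $\tau \circ \Delta : A \to A \otimes_k A$ are $k$-algebra homomorphisms ($\Delta$ because $A$ is a bialgebra, and $\tau$ because a direct check using $\tau((a \otimes b)(c \otimes d)) = \tau(ac \otimes bd) = bd \otimes ac = (b \otimes a)(d \otimes c)$ shows $\tau$ is multiplicative). Then $S$ is the equalizer of two algebra maps, which is automatically a subalgebra (and it obviously contains $1$).

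Finally, the hypothesis of generalized-primitive generation says precisely that the subalgebra of $A$ generated by grouplikes and generalized primitives is all of $A$. Combining the two previous paragraphs, this subalgebra is contained in $S$, so $S = A$, proving co-commutativity. The only subtle point is the verification that $\tau$ is an algebra map with respect to the standard tensor-product multiplication on $A \otimes_k A$; once this is in hand, the rest of the argument is essentially immediate and I do not expect any genuine obstacle.
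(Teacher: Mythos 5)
Your proposal is correct and is essentially the paper's own argument: the paper simply observes that the coproduct is cocommutative on grouplikes and generalized primitives and hence on the algebra they generate, and your equalizer set $S$ is just the careful justification of why that ``hence'' is valid. No gap, no genuinely different route.
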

\begin{proof}
Straightforward: the coproduct in $A$ is commutative on grouplikes and on generalized primitives. So if grouplikes and generalized primitives generate $A$, then the coproduct is commutative on all elements of $A$.
\end{proof}

The converse of Proposition \ref{gpg implies cocomm} fails, however. That is, it is not the case that every co-commutative $k$-bialgebra is generalized-primitively-generated. An example is as follows:
\begin{example} \label{example 230}
Let $C_3$ be the cyclic group of order three, and let $\R$ be the field of real numbers. Consider the linear dual $\R[C_3]^*$ of the group algebra $\R[C_3]$. 
By explicit, elementary calculation, one finds that
\begin{itemize}
\item the only grouplike in the Hopf algebra $\R[C_3]^*$ is $1$, 
\item so the only generalized primitives in $\R[C_3]^*$ must be primitives,
\item and there are no nonzero primitives in $\R[C_3]^*$,
\item so the subalgebra of $\R[C_3]^*$ generated by grouplikes and generalized primitives is $\R\subseteq \R[C_3]^*$.
\end{itemize}
Therefore $\R[C_3]^*$ is not generalized-primitively-generated. The argument extends to $C_n$ for any odd $n \geq 3$, because $\R$ is missing all odd-order roots of unity except for $1$.
\end{example}

The bialgebra of Example \ref{example 230} is cocommutative but not generalized-primitively-generated. However, after base change to the algebraic closure, it does become generalized-primitively-generated. This suggests the idea that perhaps cocommutative bialgebras over algebraically closed fields are generalized-primitively-generated, or more generally, that {\em pointed} cocommutative bialgebras are generalized-primitively-generated\footnote{See Definition \ref{def of coradical filt} for the definition of pointedness for bialgebras and coalgebras. See Definition \ref{def of grouplike} for a statement of the relationship between pointedness and algebraic closure of the ground field.}. This would be a kind of converse to Proposition \ref{gpg implies cocomm}. We know no reason to expect such an idea to actually be true, however: for example, the Dyer-Lashof algebra is cocommutative but fails to be generalized-primitively-generated, even after base change to $\overline{\mathbb{F}}_p$. It is probably a very difficult problem to find a reasonable and useful sufficient condition on pointed cocommutative coalgebras which ensures that they are generalized-primitively-generated, since this begins to resemble the Andruskiewitsch-Schneider conjecture, which states that every finite-dimensional pointed Hopf algebra over an algebraically closed field of characteristic zero is generated by grouplikes and skew-primitives. See Conjecture 1.4 of \cite{MR1780094} for the conjecture, and Theorem 5.5 of \cite{MR2630042} for recent progress on it.

\section{$G$-suspensive vector spaces.}
\label{G-suspensive vector spaces}

We continue to let $G$ be a commutative monoid. In Definition \ref{def of suspensive lie algebras}, we defined $G$-suspensive Lie algebras. Recall that a $G$-suspensive Lie algebra is a vector space with a $G$-grading, a $G$-action, and a Lie bracket, satisfying some compatibility conditions between these three pieces of structure. In this section we make some observations about slightly weaker objects, ``$G$-suspensive vector spaces,'' which have the $G$-grading and $G$-action but not the Lie bracket. See Definition \ref{def of susp vect space} for the full definition. 

First, we recall (e.g. as in Proposition 5.12 in Dwyer's chapter of \cite{MR1926776}) the definition of the {\em transport category} of a monoid:
\begin{definition}\label{def of Tr(G)}
By the {\em transport category of $G$} we mean the category $\Tr(G)$ whose set of objects is $G$ itself, and such that the set of morphisms $\hom_{\Tr(G)}(g,h)$ from $g$ to $h$ in $\Tr(G)$ is the set of elements $f$ of $G$ such that $fg=h$. The composite $j\circ f$ of $f\in \hom_{\Tr(G)}(g,h)$ with $j\in \hom_{\Tr(G)}(h,i)$ is defined to simply be the product $jf$ in $G$.
%
\end{definition}


\begin{definition} \label{def of susp vect space} Let $k$ be a field.
By a {\em $G$-suspensive $k$-vector space} we mean a functor $\phi: \Tr(G) \rightarrow\Vect(k)$. 

Equivalently, a $G$-suspensive $k$-vector space consists of a $G$-graded $k$-vector space $V = \bigoplus_{g\in G} \phi(g)$ together with a unital, associative action of $G$, such that, if $y\in G$ and $x\in V$ is in degree $h\in G$, then $y\cdot x\in V$ is in degree $yh$.
%
%
%
%
\end{definition}

\begin{examples} For various commonly-occurring commutative monoids $G$, here are alternative descriptions of the category of $G$-suspensive $k$-vector spaces.
\begin{itemize}
\item Let $G = \langle \sigma\rangle$, the free commutative monoid on one generator $\sigma$. Then a $G$-suspensive $k$-vector space consists of a sequence $\phi(1),\phi(\sigma),\phi(\sigma^2),\dots$ of $k$-vector spaces together with a $k$-linear function
\begin{align}
\label{morphism 1} \sigma_{n,n+1}: \phi(\sigma^n) &\rightarrow \phi(\sigma^{n+1})
\end{align}
for each nonnegative integer $n$. (Of course there is also a morphism $\sigma_{n,m}: \phi(\sigma^n)\rightarrow\phi(\sigma^m)$ for each $n\leq m$, but this morphism is equal to the composite $\sigma_{m-1,m}\circ\sigma_{m-2,m-1}\circ\dots\circ\sigma_{n+1,n+2}\circ\sigma_{n,n+1}$, so it is determined by the morphisms of the type \eqref{morphism 1}.)

In other words, the category of $G$-suspensive $k$-vector spaces is equivalent to the category of sequences $V_0 \rightarrow V_1 \rightarrow \dots$ of $k$-vector spaces.
\item Let $G = \langle \sigma\mid \sigma^2\rangle$, the cyclic group with two elements. By a similar analysis, 
the category of $G$-suspensive $k$-vector spaces is equivalent to the category of $k$-linear isomorphisms, i.e., the subcategory of the category of arrows in $\Vect(k)$ such that the arrow is an isomorphism.
\end{itemize}
\end{examples}

Here is one more perspective on what ``$G$-suspensive algebra'' is about. Given a $k$-vector space $V$, it is classical that the data of a $G$-grading on $V$ is equivalent to the data of a coassociative, counital map $V \rightarrow V\otimes_k kG$, i.e., the structure of a $kG$-comodule on $V$. If $V$ is also equipped with a $k$-linear action of $G$, then given a coassociative counital map $\psi: V \rightarrow V\otimes_k kG$, we have a $G$-action on the domain of $\psi$ and also the diagonal $G$-action on the codomain of $\psi$. To give the structure of a $G$-suspensive vector space on $V$ is equivalent to giving a choice of coassociative counital map $\psi: V \rightarrow V\otimes_k kG$ which commutes with the $G$-action. So: {\em  a $G$-suspensive vector space is a vector space equipped with an action of $kG$ and a compatible coaction of $kG$.}

\section{Rigid bialgebras.}
Throughout, we continue to let $G$ be a commutative monoid, and let $k$ be a field.

We begin with the notion of a {\em $G$-rigid bialgebra}, i.e., a bialgebra with central grouplikes which is furthermore equipped with a choice of isomorphism between its monoid of grouplikes and $G$. The definition, in more detail, is as follows:
\begin{definition}\label{def of rigid bialgebra}
By a {\em $G$-rigid $k$-bialgebra} we mean a $k$-bialgebra $A$ equipped with a homomorphism of $k$-bialgebras $\eta: kG\rightarrow A$ satisfying each of the following conditions:
\begin{itemize}
\item The monoid map $\grouplikes(kG)\stackrel{\cong}{\longrightarrow} \grouplikes(A)$ induced by $\eta$ is an isomorphism.
\item The image of $\eta$ lies in the center of $A$.
\end{itemize}
 We refer to the homomorphism $\eta$ as the {\em rigid unit map of $A$}. A {\em homomorphism of $G$-rigid $k$-bialgebras} is a homomorphism of the underlying $k$-bialgebras which commutes with the rigid unit maps.
We denote the resulting category of $G$-rigid $k$-bialgebras by $\Rig_G\Bialg(k)$, and the full subcategory of \emph{cocommutative} $G$-rigid $k$-bialgebras by $\Cocomm\Rig_G\Bialg(k)$.
\end{definition}
It is worth being explicit about this point: if $A$ is a $G$-rigid $k$-bialgebra, then the elements of $G$ are grouplike in $A$, and consequently are central. Hence $A$ is a $kG$-algebra. However, a $G$-rigid $k$-bialgebra 
 is in general {\em not} a $kG$-{\em bi}algebra. The failure of a $G$-rigid $k$-algebra to be a $kG$-bialgebra is visible in the action of $kG$ on the tensor product $A\otimes_kA$, as follows:
if $A$ is a $G$-rigid $k$-bialgebra, then for any $g\in G$ and any $a_1\otimes a_2\in A\otimes_k A$, we have $g\cdot (a_1\otimes a_2) = ga_1 \otimes ga_2$. This is because $g$ is grouplike in $kG$ and because the rigid unit map $kG\rightarrow A$ is a bialgebra map.
On the other hand, if $A$ were instead a $kG$-bialgebra, then we would have $\Delta(g) = g\otimes 1 = 1\otimes g = g(1\otimes 1)$, which is not equal to $g\otimes g$ unless $g=1$. 

\begin{prop}\label{W and GP adjunction}
The functor $W: \Susp_G\Lie(k)\rightarrow \Cocomm\Rig_G\Bialg(k)$ defined in Definition-Proposition \ref{def of W} is left adjoint to the functor $$GP_{*}: \Cocomm\Rig_G\Bialg(k)\rightarrow \Susp_G\Lie(k).$$
\end{prop}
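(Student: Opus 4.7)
The plan is to establish the adjunction by exhibiting the natural bijection
\[ \hom_{\Cocomm\Rig_G\Bialg(k)}(WL, A) \cong \hom_{\Susp_G\Lie(k)}(L, GP_*(A)) \]
directly, constructing mutually inverse assignments. In the forward direction, given a $G$-rigid bialgebra homomorphism $f: WL \to A$, I would restrict along the $k$-linear map $L \to WL$ sending $\ell \mapsto 1\odot\ell$. The coproduct formula in Definition-Proposition \ref{def of W} guarantees that $1\odot\ell$ is a $(Q\odot 1)$-primitive whenever $\ell\in L_Q$, and since $f$ is a coalgebra map that sends the rigid unit $Q\odot 1 \in WL$ to the grouplike $\eta_A(Q)\in A$, the element $f(1\odot\ell)$ is a $\eta_A(Q)$-primitive in $A$, i.e., a homogeneous element of $GP_Q(A)$. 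The Lie bracket relations in $WL$ force the resulting map $L\to GP_*(A)$ to be a Lie homomorphism, and the relation $g\odot \ell = 1\odot (g\cdot\ell)$ forces $G$-equivariance.

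In the reverse direction, given a $G$-suspensive Lie algebra map $\phi: L \to GP_*(A)$, I would define
\[ \tilde\phi(g\odot\ell) = \eta_A(g)\cdot\phi(\ell), \]
extend multiplicatively using the rigid unit map $\eta_A: kG\to A$, and verify well-definedness on the presentation given in Definition-Proposition \ref{def of W}. The relation $1\odot(\ell\ell' - \ell'\ell) = 1\odot [\ell,\ell']$ is handled by the fact that $\phi$ is a Lie homomorphism; the relation $g\odot\ell = 1\odot(g\cdot\ell)$ is handled by the $G$-equivariance of $\phi$ combined with centrality of $\eta_A(g)$ (built into the $G$-rigid structure). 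To check that $\tilde\phi$ is a coalgebra map, it suffices to verify on generators $g\odot 1$ and $1\odot\ell$: the former is automatic since $\eta_A(g)$ is grouplike in $A$ and $g\odot 1$ is grouplike in $WL$, while the latter reduces to the statement that $\phi(\ell)\in GP_Q(A)$, so that $\Delta(\phi(\ell)) = \phi(\ell)\otimes\eta_A(Q) + \eta_A(Q)\otimes\phi(\ell)$ exactly matches the image under $\tilde\phi\otimes\tilde\phi$ of $\Delta(1\odot\ell)$.

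Finally, I would check that the two assignments are mutually inverse: the composite starting from $f$ and returning to $WL\to A$ agrees with $f$ on the generators $g\odot 1$ and $1\odot\ell$, which suffice by the presentation; the composite starting from $\phi$ and restricting back recovers $\phi$ on the nose. Naturality in both variables is a matter of chasing generators through the definitions and is routine.

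The main obstacle I anticipate is keeping the bookkeeping clean when verifying well-definedness of $\tilde\phi$ on the presentation of $WL$. In particular, showing that the assignment respects the relation $g\odot \ell = 1\odot(g\cdot\ell)$ requires simultaneously invoking centrality of $\eta_A(g)$ (so that $\eta_A(g)\phi(\ell)$ really is a $\eta_A(g Q)$-primitive when $\ell\in L_Q$) and the $G$-equivariance of $\phi$ (so that $\phi(g\cdot\ell) = \eta_A(g)\phi(\ell)$, where on the left the $G$-action is the structural one on $L$ and on the right the $G$-action on $GP_*(A)$ is multiplication by grouplikes). Getting the compatibility between these two $G$-actions right, and confirming that on $GP_*(A)$ the $kG$-bilinearity of the bracket follows from centrality of the grouplikes, is the delicate part; once it is in hand, the rest of the proof is largely formal manipulation of the presentation.
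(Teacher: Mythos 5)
Your proposal is correct and follows essentially the same route as the paper's proof: both directions of the bijection are constructed by restriction along $L\to WL$ on one side and by the universal property of the presentation of $WL$ (using centrality of the grouplikes and checking the coalgebra condition on generators) on the other. The points you flag as delicate — compatibility of the two $G$-actions and well-definedness on the relation $g\odot\ell = 1\odot(g\cdot\ell)$ — are exactly the ones the paper's argument handles, so no gap.
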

\begin{proof}
Suppose that $L$ is a $G$-suspensive Lie $k$-algebra, and suppose that $A$ is a cocommutative $G$-rigid $k$-bialgebra. It is elementary to check that the function 
\begin{align*}
 \alpha: \hom_{\Cocomm\Rig_G\Bialg(k)}\left( WL,A\right) 
  &\rightarrow \hom_{\Susp_G\Lie(k)}\left( L,GP_*(A)\right) 
\end{align*}
is well-defined, where $\alpha$ is given by letting $(\alpha(f))(\ell) = f(\ell)$ for each morphism $f: WL \rightarrow A$ of $G$-rigid $k$-bialgebras.
To show that $GP_*$ is right adjoint to $W$, all we need is an inverse to $\alpha$. Such an inverse is the function 
\begin{align*}
 \beta: \hom_{\Susp_G\Lie(k)}\left( L,GP_*(A)\right) 
   &\rightarrow \hom_{\Cocomm\Rig_G\Bialg(k)}\left( WL,A\right) \end{align*}
given on an element $f\in \hom_{\Susp_G\Lie(k)}\left( L,GP_*(A)\right)$ as follows:
by the universal property of the free associative $k$-algebra $T(L)$, there exists a unique $k$-algebra homomorphism $\tilde{f}: T(L)\rightarrow A$ such that $\tilde{f}(\ell) = f(\ell)\in GP_*(A)\subseteq A$ for each $\ell\in L$. Since the grouplikes in $A$ are central, 
we get a well-defined $k$-algebra homomorphism $\overline{f}: kG\odot_k T(L) \rightarrow A$ given by $\overline{f}(g\odot \ell) = g\cdot \tilde{f}(\ell)$. (See Definition-Proposition \ref{def of W} for the definition of the $\odot$ notation.) Since $\overline{f}(1\odot [\ell_1,\ell_2]) = \overline{f}(1\odot \ell_1)\overline{f}(1\odot \ell_2) - \overline{f}(1\odot \ell_2)\overline{f}(1\odot \ell_1)$ and since $\overline{f}(g\odot \ell) = \overline{f}\left(1\odot (g\cdot \ell)\right)$, the map $\overline{f}$ factors as the projection $kG\odot_k T(L) \twoheadrightarrow W(L)$ followed by a unique $kG$-algebra homomorphism $W(L)\rightarrow A$, which is the desired morphism $\beta(f)$. 

We need to verify that $\beta(f): WL\rightarrow A$ is not only a morphism of $kG$-algebras, but in fact a morphism of $G$-rigid $k$-bialgebras. That $\beta(f)$ commutes with the $G$-action is straightforward from the relation $g\odot \ell = 1\odot g\cdot \ell$ in $W(L)$. Consequently $\beta(f)$ commutes with the rigid unit maps. Since $W(L)$ is generated, as a $kG$-algebra, by the elements of $L\subseteq W(L)$, to check that $\beta(f)$ is a coalgebra morphism it suffices to check that $\Delta(\beta(f)(\ell)) = \left(\beta(f)\otimes \beta(f)\right) \Delta(\ell)$ for all $\ell\in L$.
This is straightforward: suppose that $\ell\in L$ is homogeneous of degree $Q$. Then we have:
\begin{align*}
 \Delta(\beta(f)(\ell)) 
  &= \Delta(f(\ell)) \\
  &= f(\ell)\otimes Q + Q\otimes f(\ell) \in A\otimes_{k} A \\
  &= (\beta(f)\otimes \beta(f))(\ell\otimes Q + Q\otimes \ell) \\ 
  &= (\beta(f)\otimes \beta(f))(\Delta(\ell)),
\end{align*}
as desired.

We have $\beta(\alpha(f))(\ell) = \alpha(f)(\ell) = f(\ell)$ and $\alpha(\beta(f))(\ell) = \beta(f)(\ell) = f(\ell)$ straightforwardly from the definitions of $\alpha$ and $\beta$, so $\alpha$ and $\beta$ are indeed mutually inverse.
\end{proof}

\section{Injectivity and generalized primitives.}
\label{Injectivity and...}



A classical result, Proposition 3.9 of \cite{MR0174052}, establishes that a map of augmented\footnote{To avoid possible confusion, we point out that here we are using the phrase ``augmented coalgebra'' in the same way as how the phrase is used in \cite{MR0174052}, i.e., a coalgebra $A$ equipped with a suitable {\em unit} map $k\rightarrow A$.} $k$-coalgebras $A\rightarrow B$, with $A$ connected and $k$ a field, is injective if and only if its induced map $PA \rightarrow PB$ is injective.
We will need an analogue of that classical result which applies to certain $G$-rigid bialgebras. That analogue is Proposition \ref{GP and injectivity}, and it is an easy consequence of two classical results on coalgebras and one classical result on bialgebras. We recall the three results below, as Theorems \ref{heyneman-radford thm} and \ref{taft-wilson thm} and \ref{skew prim gen implies pointed}.
First, we recall a classical definition (see chapter 5 of \cite{MR1243637} for an excellent treatment of these ideas):
\begin{definition}\label{def of coradical filt}
Let $k$ be a field, and let $A$ be a $k$-coalgebra.
\begin{itemize}
\item Given grouplikes $g,h$ of $A$, a {\em $(g,h)$-primitive of $A$} is an element $x\in A$ such that $\Delta(x) = g\otimes x + x\otimes h$. An element $x\in A$ is {\em skew-primitive} if $x$ is $(g,h)$-primitive for some grouplikes $g,h$ of $A$.
\item The {\em coradical} of $A$, written $A_0$, is the sum of all simple coalgebras of $A$.
\item The {\em coradical filtration of $A$} is the filtration of $A$ by subcoalgebras
\[ A_0 \subseteq A_1 \subseteq A_2 \subseteq \dots \subseteq A\]
of $A$, defined by letting $A_n$ be the preimage of $A\otimes A_{n-1} + A_0\otimes A$ under the coproduct map $A\rightarrow A\otimes_k A$.
\item We say that $A$ is {\em pointed} if every simple subcoalgebra of $A$ is one-dimensional as a $k$-vector space. That is (see the paragraph following 5.1.5 of \cite{MR1243637}), $A$ is pointed if and only if the coradical of $A$ coincides with the $k$-linear span of the grouplike elements of $A$. 
\end{itemize}
\end{definition}

\begin{theorem} {\bf (Heyneman-Radford.)}\label{heyneman-radford thm} Suppose that $k$ is a field, and suppose that $A,B$ are $k$-coalgebras. Let $f: A\rightarrow B$ be a coalgebra morphism whose restriction to $A_1$ is injective. Then $f$ is injective.
\end{theorem}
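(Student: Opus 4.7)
The plan is to show by induction on $n$ that $f$ is injective on the $n$-th stage $A_n$ of the coradical filtration, and then to invoke the classical fact that the coradical filtration exhausts $A$, i.e.\ $A = \bigcup_{n\geq 0} A_n$. That exhaustivity is standard (Theorem~5.2.2 of \cite{MR1243637}) and rests on the structure theorem that every $k$-coalgebra is the filtered union of its finite-dimensional subcoalgebras, on each of which the coradical filtration terminates after finitely many steps.

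The base of the induction is given for free: $A_0 \subseteq A_1$, and $f|_{A_1}$ is injective by hypothesis. For the inductive step, suppose $f|_{A_n}$ is injective, and take $x \in A_{n+1}$ with $f(x) = 0$. By the defining property of the coradical filtration we can write
\begin{equation*}
\Delta(x) = \sum_{i} a_i \otimes b_i + \sum_{j} c_j \otimes d_j,
\end{equation*}
with $b_i \in A_n$ and $c_j \in A_0$. Applying $f\otimes f$ produces $\Delta(f(x)) = 0$, and applying the counit $\epsilon$ to one tensor factor should then force $x$ to vanish, by playing the relation off against the injectivity of $f$ on both $A_0$ and $A_n$.

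The main obstacle is that the above decomposition of $\Delta(x)$ is not canonical: the sum $A\otimes A_n + A_0\otimes A$ is not a direct sum, so one cannot simply read off two separate vanishing equations from the two summands. The cleanest remedy, which I would adopt, is to reformulate the induction using the wedge product $V\wedge W := \Delta^{-1}(V\otimes A + A\otimes W)$. The coradical filtration satisfies $A_{n+1} = A_0\wedge A_n$, and the inductive step then reduces to a preparatory wedge lemma: if $f$ is injective on subcoalgebras $V$ and $W$, then $f$ is injective on $V\wedge W$. Proving this lemma is the core technical step; one chooses linearly independent representatives carefully and tracks the action of $f \otimes f$ on the representing tensors, using the counit to recover elements from their coproducts and thereby removing the ambiguity in the decomposition. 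Once the wedge lemma is in hand, iterating it along $A_1 \subseteq A_0\wedge A_1 = A_2 \subseteq A_0 \wedge A_2 = A_3 \subseteq \cdots$ completes the induction and yields the result.
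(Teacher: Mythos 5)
The paper does not actually prove this theorem: its ``proof'' is a pointer to the original 1974 paper of Heyneman and Radford and to Theorem 5.3.1 of \cite{MR1243637}, so the only question is whether your sketch would stand on its own. It would not, because the ``wedge lemma'' you isolate as the core technical step --- if $f$ is injective on subcoalgebras $V$ and $W$, then $f$ is injective on $V\wedge W$ --- is false as stated, and you never prove it. Counterexample: let $C$ be the two-dimensional divided power coalgebra with basis $c_0,c_1$, so $\Delta(c_0)=c_0\otimes c_0$, $\Delta(c_1)=c_0\otimes c_1+c_1\otimes c_0$, $\epsilon(c_0)=1$, $\epsilon(c_1)=0$, and let $f\colon C\rightarrow kc_0$ be the projection annihilating $c_1$. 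This is a coalgebra morphism, it is injective on the subcoalgebra $V=W=kc_0=C_0$, yet $V\wedge W=C_1=C$ and $f(c_1)=0$. (This is consistent with the theorem, whose hypothesis is injectivity on $A_1$, not on $A_0$.) In the instances you actually iterate, $V=A_0$ and $W=A_n$ with $n\geq 1$, the desired conclusion happens to be true, but it is precisely the inductive step of the theorem itself and does not follow from any general principle about wedges of subcoalgebras on which $f$ is injective. The phrase ``one chooses linearly independent representatives carefully and tracks the action of $f\otimes f$'' is where the entire content of the proof lives; the non-canonicity of the decomposition of $\Delta(x)$ that you correctly flag is exactly the difficulty being papered over.

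The argument that actually works is organized differently and does not proceed by proving injectivity subcoalgebra-by-subcoalgebra. Put $I=\ker f$, which is a coideal. If $I\neq 0$, exhaustivity of the coradical filtration yields a minimal $n$ with $I\cap A_n\neq 0$, and the hypothesis forces $n\geq 2$. Two nontrivial inputs are then needed: first, that the coradical filtration is a coalgebra filtration, i.e.\ $\Delta(A_n)\subseteq\sum_{i+j=n}A_i\otimes A_j$ (Theorem 5.2.2 of \cite{MR1243637}); second, the linear-algebra identity $(I\otimes A+A\otimes I)\cap(V\otimes W)=(I\cap V)\otimes W+V\otimes(I\cap W)$ for subspaces $V,W\subseteq A$, which follows from writing $I\otimes A+A\otimes I$ as the kernel of $\pi\otimes\pi$ for $\pi\colon A\rightarrow A/I$. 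Applying these to a nonzero $c\in I\cap A_n$, minimality of $n$ kills the cross terms $A_i\otimes A_{n-i}$ with $0<i<n$, forcing $\Delta(c)\in A_0\otimes A+A\otimes A_0$, i.e.\ $c\in A_0\wedge A_0=A_1$; this contradicts $n\geq 2$ and the injectivity of $f$ on $A_1$. If you want a self-contained proof, follow this route (as in Theorem 5.3.1 of \cite{MR1243637}) rather than the wedge lemma.
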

\begin{proof}
The original 1974 paper of Heymeman and Radford is \cite{MR346001}. See Theorem 5.3.1 in \cite{MR1243637} for a presentation (with proof) in the context of related results.
\end{proof}

The statement of the Taft-Wilson theorem given below, in Theorem \ref{taft-wilson thm}, is only one part of the original Taft-Wilson result, but we will not need to make use of the other parts of the result in this paper.
\begin{theorem} {\bf (Taft-Wilson.)}\label{taft-wilson thm} Suppose that $k$ is a field, and suppose that $A$ is a pointed $k$-coalgebra. Then $A_1$ is $k$-linearly spanned by the grouplikes and the skew-primitives of $A$.
\end{theorem}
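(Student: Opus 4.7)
The plan is to exploit the pointedness hypothesis to reduce the question to explicit bookkeeping with grouplikes. Since $A$ is pointed, its coradical decomposes as a $k$-vector space direct sum
$$A_0 = \bigoplus_{g \in \grouplikes(A)} kg$$
of one-dimensional simple subcoalgebras. For any $x \in A_1$, the defining property $\Delta(x) \in A \otimes A_0 + A_0 \otimes A$ of the coradical filtration allows us to write
$$\Delta(x) = \sum_{g \in S} a_g \otimes g + \sum_{h \in T} h \otimes b_h$$
for some finite sets $S, T \subseteq \grouplikes(A)$ and some $a_g, b_h \in A$. This expression is not canonical, since any pure tensor $\lambda(h \otimes g)$ with $h, g$ grouplike may be absorbed into either summand, so the first technical step would be to fix a normalization that eliminates this ambiguity (for instance, requiring the grouplike parts of each $a_g$ to lie in $kg$).

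Next, I would apply the counit $\epsilon$ in each tensor slot, using $\epsilon(g) = 1$ for every grouplike $g$, to obtain two formulas that recover $x$ from the $a_g$, $b_h$, and grouplike data. I would then invoke coassociativity in the form $(\Delta \otimes \mathrm{id})\Delta(x) = (\mathrm{id} \otimes \Delta)\Delta(x)$, expand both sides using the displayed decomposition of $\Delta(x)$, and compare. The key input is that the pure tensors $g \otimes g'$ of distinct grouplikes are $k$-linearly independent in $A_0 \otimes A_0$, so that the relevant coefficients may be matched across the two sides. This matching, combined with counitality and the chosen normalization, forces each $a_g$ (respectively $b_h$) to decompose as a scalar multiple of a grouplike plus a $(g',g)$-skew-primitive (respectively an $(h,h')$-skew-primitive) for suitable grouplikes $g',h'$.

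Reassembling these pieces through the counit formulas then expresses $x$ itself as a $k$-linear combination of grouplikes and skew-primitives, which is the desired conclusion. The main obstacle I anticipate is the bookkeeping: the normalization of the representation of $\Delta(x)$ and the linear-independence comparison issuing from coassociativity must be set up in tandem, so that the grouplike and skew-primitive contributions genuinely separate without hidden double-counting. Once a careful normalization is in place, the remaining work reduces to elementary linear algebra, counitality, and the structural identity supplied by coassociativity.
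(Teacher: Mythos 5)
The paper does not actually prove this statement: it is the classical Taft--Wilson theorem, quoted with a citation to the original 1974 paper and to Theorem 5.4.1 of \cite{MR1243637}. So the comparison here is between your sketch and the standard published proof rather than anything internal to this paper.

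Your sketch assembles the right raw ingredients (pointedness gives $A_0 = kG$, hence $\Delta(x) \in A\otimes kG + kG\otimes A$ for $x\in A_1$; counitality; coassociativity; linear independence of grouplikes), but the step that carries all the content of the theorem is asserted rather than argued. The claim that the coassociativity bookkeeping ``forces each $a_g$ to decompose as a scalar multiple of a grouplike plus a skew-primitive'' is essentially the statement of the theorem applied to $a_g$ itself: using the standard fact that $\Delta(A_1)\subseteq A_0\otimes A_1 + A_1\otimes A_0$ one may take $a_g, b_h\in A_1$, so asserting that they lie in the span of grouplikes and skew-primitives is circular unless you exhibit the skew-primitives explicitly. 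Two concrete pieces are missing. First, the clean way to obtain your ``normalization'' is to pass to the quotient $A_1/A_0$, observe that it is a bicomodule over the grouplike coalgebra $kG$, and use cosemisimplicity of $kG$ to decompose it into $(g,h)$-isotypic components; working directly with a non-canonical expression for $\Delta(x)$ inside $A\otimes A$ leaves you fighting exactly the ambiguity you flagged. Second, and more importantly, a lift $c$ of a $(g,h)$-isotypic class only satisfies $\Delta(c) = g\otimes c + c\otimes h + u$ with an error term $u\in kG\otimes kG$; the heart of the proof is the verification that coassociativity forces $u$ to be a ``coboundary,'' i.e.\ $u = \Delta(v) - g\otimes v - v\otimes h$ for some $v\in kG$, so that $c-v$ is an honest $(g,h)$-skew-primitive. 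That cocycle-to-coboundary computation is where counitality and the linear independence of the tensors $s\otimes t\otimes r$ of grouplikes are actually used, and it is precisely the step your sketch defers to ``elementary linear algebra'' without carrying out. Until that correction step is done, the proposal is a plan rather than a proof.
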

\begin{proof}
The original paper of Taft and Wilson, also from 1974, is \cite{MR338053}. It also appears, with proof, as Theorem 5.4.1 in \cite{MR1243637}, again usefully in the context of related results.
\end{proof}

We do not know the original reference for Theorem \ref{skew prim gen implies pointed}. We learned of it from \cite{MR1483071}, which in turn cites Lemma 1 of \cite{MR1261906}. We do not know of an earlier reference. At least in its form for Hopf algebras rather than bialgebas, it seems to be quite well-known (e.g. ``It is easy to see that any Hopf algebra generated by grouplike and skew-primitive elements is automatically pointed'' in \cite{MR1913436}).
\begin{theorem}\label{skew prim gen implies pointed}
Let $k$ be a field, and let $A$ be a $k$-bialgebra which is generated, as a $k$-algebra, by its grouplike and skew-primitive elements. Then $A$ is pointed.
\end{theorem}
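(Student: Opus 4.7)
The plan is to show that the coradical $A_0$ coincides with $k\cdot \grouplikes(A)$, by exhibiting $A$ as a filtered union of pointed subcoalgebras. Throughout, I will use the standard fact (Lemma 5.3.4 of \cite{MR1243637}) that the coradical filtration is the coarsest coalgebra filtration: if $\{V_n\}$ is any coalgebra filtration of a coalgebra $C$, meaning $\Delta(V_n)\subseteq \sum_{i+j=n}V_i\otimes V_j$, then $C_n\subseteq V_n$ for all $n$, and in particular $C_0\subseteq V_0$.

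First I would let $V\subseteq A$ be the $k$-linear span of $1$, the grouplikes, and the skew-primitives of $A$. Because $\Delta(g)=g\otimes g$ for a grouplike and $\Delta(x)=g\otimes x + x\otimes h$ for a $(g,h)$-skew-primitive, $V$ is a subcoalgebra of $A$. Define a two-step filtration on $V$ by $V^{(0)}=k\cdot \grouplikes(A)$ and $V^{(n)}=V$ for $n\geq 1$. The skew-primitive formula shows this is a coalgebra filtration. By the coarsest-filtration principle, $V_0\subseteq V^{(0)}=k\cdot \grouplikes(A)\subseteq V_0$, so $V$ is pointed.

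Next I would show that if $C$ and $D$ are pointed subcoalgebras of $A$, then the subcoalgebra $CD:=\nabla(C\otimes D)$, where $\nabla$ is the multiplication, is again pointed. The filtration $(CD)^{(n)}:=\sum_{i+j=n}C_iD_j$ is a coalgebra filtration because $\Delta$ is an algebra map and the coradical filtrations of $C$ and $D$ are coalgebra filtrations; at the bottom we get $(CD)^{(0)} = C_0D_0 = k\cdot \grouplikes(C)\cdot k\cdot \grouplikes(D)\subseteq k\cdot \grouplikes(A)$, since a product of grouplikes is grouplike. The coarsest-filtration principle then forces $(CD)_0\subseteq k\cdot \grouplikes(A)$, so $CD$ is pointed.

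Finally, for $n\geq 1$, write $V^n$ for the image of the $n$-fold multiplication map $V^{\otimes n}\rightarrow A$. Iterating the previous step, each $V^n$ is a pointed subcoalgebra of $A$. Since $A$ is generated by grouplikes and skew-primitives as an algebra and $1\in V$, we have $A=\bigcup_{n\geq 0} V^n$. A directed union of pointed subcoalgebras is pointed, because any simple subcoalgebra is finite-dimensional and hence contained in some $V^n$, so $A_0=\bigcup_n (V^n)_0\subseteq k\cdot \grouplikes(A)\subseteq A_0$. Thus $A$ is pointed. The main obstacle is purely bookkeeping: organizing the application of the coarsest-filtration principle at the two places it is needed.
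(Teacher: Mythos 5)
Your argument is correct. Note that the paper does not actually prove Theorem \ref{skew prim gen implies pointed}: it simply cites Corollary 1.7.3 of \cite{MR1483071} (which in turn rests on Lemma 1 of \cite{MR1261906}). What you have written is, in effect, a self-contained reconstruction of the standard argument behind that citation, and all the steps check out: the span $V$ of the grouplikes and skew-primitives is a subcoalgebra with an evident two-step coalgebra filtration whose bottom piece is $k\cdot\grouplikes(A)$; the product $CD$ of two pointed subcoalgebras carries the coalgebra filtration $\sum_{i+j=n}C_iD_j$ (this uses that $\Delta$ is an algebra map, i.e., the bialgebra axiom, which is where the hypothesis that $A$ is a bialgebra and not merely a coalgebra enters); and the nested union $A=\bigcup_n V^n$ is handled by finite-dimensionality of simple subcoalgebras. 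The only fact you import is that $C_0\subseteq V_0$ for any exhaustive coalgebra filtration $\{V_n\}$ of $C$, which is exactly Lemma 5.3.4 of \cite{MR1243637}; the stronger statement $C_n\subseteq V_n$ that you quote is also true but is never used, so you could safely omit it. In short, your proposal supplies a proof where the paper supplies only a reference, and it is the right proof.
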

\begin{proof}
See Corollary 1.7.3 of \cite{MR1483071}.
\end{proof}

Now here is an easy consequence\footnote{This consequence is not new: it is, for example, nearly the same as Proposition 3 of \cite{MR909627}.} of Theorems \ref{heyneman-radford thm}, \ref{taft-wilson thm}, and \ref{skew prim gen implies pointed}:
\begin{prop}\label{GP and injectivity}
Let $k$ be a field, let $G$ be a commutative monoid, and let 
$A,B$ be $G$-rigid $k$-bialgebras. Suppose that $A$ is generalized-primitively-generated.
Let $f: A \rightarrow B$ be a homomorphism of $G$-rigid $k$-bialgebras. 
Then $f$ is injective if and only if $GP_*f: GP_*A\rightarrow GP_*B$ is injective.
\end{prop}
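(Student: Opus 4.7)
The forward direction is immediate: the restriction of an injective map to a subspace is injective. For the reverse direction, assuming $GP_*f$ is injective, the plan is to invoke Heyneman-Radford (Theorem \ref{heyneman-radford thm}) to reduce the problem to showing that $f|_{A_1}$ is injective, where $A_1$ is the first stage of the coradical filtration of $A$.

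Since generalized primitives are the special case of $(Q,Q)$-primitives, $A$ is generated as a $k$-algebra by its grouplikes and skew-primitives, so Theorem \ref{skew prim gen implies pointed} gives that $A$ is pointed; in particular $A_0 = kG$ under the $G$-rigid identification of grouplikes with $G$. Further, $A$ is cocommutative by Proposition \ref{gpg implies cocomm}, which I plan to use to analyze skew-primitives that are not generalized primitives: if $x$ is a $(g,h)$-primitive with $g\neq h$, then $\Delta(x) = g\otimes x + x\otimes h$ combined with the cocommutativity relation $\Delta(x) = h\otimes x + x\otimes g$ yields $(g-h)\otimes x = x\otimes(g-h)$ in $A\otimes_k A$. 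A short linear-algebra argument (using that $\{u\otimes v, v\otimes u\}$ is linearly independent in $A\otimes_k A$ whenever $u,v\in A$ are linearly independent) then forces $x$ and $g-h$ to be linearly dependent, so $x\in k(g-h)\subseteq kG$. Together with Taft-Wilson (Theorem \ref{taft-wilson thm}), this gives the identification $A_1 = kG + GP_*(A)$.

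The core technical ingredient is then the lemma: in any $G$-rigid $k$-bialgebra $C$, the intersection $kG\cap GP_*(C)$ is zero. I plan to prove this in two stages. First, by matching coefficients in $\Delta(\sum_g c_g g) = \sum_g c_g g\otimes g$ against $Q\otimes\alpha + \alpha\otimes Q$, one verifies that no nonzero element of $kG$ is itself a $Q$-primitive. Second, for a general $\alpha\in kG\cap GP_*(C)$, write $\alpha = \sum_Q \alpha_Q$ with $\alpha_Q\in GP_Q(C)$, choose a vector-space complement $V$ of $kG$ in $C$, and decompose each $\alpha_Q = \beta_Q + \gamma_Q$ with $\beta_Q\in kG$ and $\gamma_Q\in V$. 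Matching components of $\Delta(\alpha) = \sum_Q(Q\otimes\alpha_Q + \alpha_Q\otimes Q)$ in the decomposition $C\otimes_k C = (kG\otimes kG)\oplus (kG\otimes V)\oplus (V\otimes kG)\oplus (V\otimes V)$, the $kG\otimes V$ component reads $\sum_Q Q\otimes\gamma_Q = 0$, which forces each $\gamma_Q = 0$ since $G$ is a $k$-basis of $kG$; hence each $\alpha_Q = \beta_Q\in kG$ is $Q$-primitive and therefore zero by the first stage.

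Granting the lemma for both $A$ and $B$, the identity $A_1 = kG + GP_*(A)$ upgrades to a natural isomorphism $A_1/A_0\cong GP_*(A)$, and likewise $B_1/B_0\cong GP_*(B)$; under these identifications, the quotient map $A_1/A_0\to B_1/B_0$ induced by $f$ coincides with $GP_*f$, which is injective by hypothesis. Since $f|_{A_0}\colon kG\to kG$ is the identity by the $G$-rigid structure, a short diagram chase on the short exact sequences $0\to A_0\to A_1\to A_1/A_0\to 0$ (and its analogue for $B$) yields injectivity of $f|_{A_1}$, after which Heyneman-Radford completes the proof. I expect the main obstacle to be a clean writeup of the lemma $kG\cap GP_*(C) = 0$; the component-matching argument is elementary but mildly tedious.
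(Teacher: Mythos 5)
Your proof follows the same route as the paper's: use Theorem \ref{skew prim gen implies pointed} and cocommutativity (Proposition \ref{gpg implies cocomm}) to get pointedness of $A$, apply Taft--Wilson (Theorem \ref{taft-wilson thm}) to control $A_1$, and finish with Heyneman--Radford (Theorem \ref{heyneman-radford thm}). The paper's own write-up is far terser --- it simply asserts that Taft--Wilson gives injectivity of $f$ on $A_1$ --- so the details you supply (the analysis showing a $(g,h)$-primitive with $g\neq h$ lies in $k(g-h)\subseteq kG$ when the bialgebra is cocommutative, and the lemma $kG\cap GP_*(C)=0$ for any $G$-rigid $C$) are exactly the content being elided, and both are correct as you state them.

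One step needs repair. You invoke ``$B_1/B_0\cong GP_*(B)$'' and run a diagram chase on the coradical filtrations of both $A$ and $B$. But $B$ is \emph{not} assumed generalized-primitively-generated, hence not known to be pointed, so Taft--Wilson does not apply to $B$: its coradical $B_0$ may be strictly larger than $kG$, and $B_1$ need not be spanned by grouplikes and skew-primitives. The isomorphism $B_1/B_0\cong GP_*(B)$ is therefore unjustified (and the induced map $GP_*(B)\to B_1/B_0$ being injective would require $GP_*(B)\cap B_0=0$, which your lemma does not give). Fortunately the chase is unnecessary: from your own ingredients, $A_1=kG\oplus GP_*(A)$ as $k$-vector spaces, $f$ carries $kG$ isomorphically onto $kG\subseteq B$ and carries $GP_*(A)$ injectively into $GP_*(B)$, and your lemma applied to $B$ gives $kG\cap GP_*(B)=0$ in $B$. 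So if $x\in kG$ and $y\in GP_*(A)$ satisfy $f(x+y)=0$, then $f(x)=-f(y)\in kG\cap GP_*(B)=0$, forcing $x=0$ and $y=0$. This gives injectivity of $f|_{A_1}$ directly, and Heyneman--Radford finishes the argument as you intended.
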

\begin{proof}
Clearly the injectivity of $f$ implies the injectivity of $GP_*f$.
For the converse, suppose that $GP_*f$ is injective. Since $A,B$ are $G$-rigid and $f$ is a morphism of $G$-rigid bialgebras, $f$ is an isomorphism on grouplikes. Since $A$ is co-commutative (by Proposition \ref{gpg implies cocomm}), every skew-primitive in $A$ must be a generalized primitive. Since $A$ is generalized-primitively-generated, it is consequently generated by grouplikes and skew-primitives, so Theorem \ref{skew prim gen implies pointed} implies that $A$ is pointed. Hence, by Theorem \ref{taft-wilson thm}, $f$ is injective on $A_1$. Then Theorem \ref{heyneman-radford thm} immediately gives us that $f$ is injective.
\end{proof}

Proposition \ref{GP and injectivity} plays an important role in the proof of Theorem \ref{mm thm char 0}, below.

\section{Milnor-Moore for bialgebras in characteristic zero, torsion-free case.}

Definition \ref{def of lie filt} recalls the Lie filtration, defined by Milnor and Moore in Definition 5.12 of \cite{MR0174052}, and offers a generalization of it.
\begin{definition}\label{def of lie filt}
Let $k$ be a field.
\begin{itemize}
\item Given a Lie $k$-algebra $L$, the {\em Lie filtration on $UL$} is the increasing filtration of $UL$ by $k$-vector subspaces
\[ UL \supseteq \dots \supseteq F_2UL \supseteq F_1UL \supseteq F_0UL \]
given by:
\begin{itemize}
\item $F_0UL = k$, i.e, $F_0UL$ is the image of the unit map $k\rightarrow UL$,
\item and for each positive integer $n$, $F_nUL$ is the image of the multiplication map \[ L\otimes_k F_{n-1}UL \subseteq UL\otimes_k UL \rightarrow UL.\]
\end{itemize}
\item 
Given a commutative monoid $G$ and a $G$-suspensive Lie $k$-algebra $L$, by the {\em Lie filtration on $WL$} we mean the increasing filtration of $WL$ by $k$-vector subspaces
\[ WL \supseteq \dots \supseteq \tilde{F}_2WL \supseteq \tilde{F}_1WL \supseteq \tilde{F}_0WL \]
given by letting $\tilde{F}_nWL$ be the $kG$-linear span of the image of $F_nUL\subseteq UL$ in $WL$ under the map \[ UL \rightarrow WL = (kG\otimes_k UL)/\left( g\otimes \ell - 1\otimes (g\cdot\ell)\right).\]
\end{itemize}
Following Milnor and Moore, we write $\LieFilt UL$ for the associated graded of the Lie filtration on $UL$. Similarly, we write $\SLieFilt WL$ for the associated graded of the Lie filtration on $WL$.
\end{definition}
 
\begin{observation}\label{observation re assoc graded of lie filt}
Here are some observations about Definition \ref{def of lie filt}.
\begin{enumerate}
\item It is classical that the elements in Lie filtration $n$ in $UL$ are those which are expressible as a linear combination of products of $(\leq n)$-tuples in $L\subseteq UL$. Similarly, the elements in Lie filtration $n$ in $WL$ are the linear combinations of products of grouplikes and $(\leq n)$-tuples in $L\subseteq WL$.
\item It is straightforward that, if $f: L\rightarrow L^{\prime}$ is a homomorphism of $G$-suspensive Lie $k$-algebras, then $f$ is compatible with the Lie filtration, in the sense that $Wf$ sends elements of $\tilde{F}_nWL$ to elements of $\tilde{F}_nWL^{\prime}$. Consequently $f$ induces a homomorphism $\SLieFilt f: \SLieFilt WL \rightarrow \SLieFilt WL^{\prime}$ of associated graded bialgebras. 
\end{enumerate}
\end{observation}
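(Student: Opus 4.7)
The plan is to verify both parts by straightforward induction, essentially unpacking the inductive definition of the Lie filtration. For part (1) on $UL$, I proceed by induction on $n$. The base case $F_0UL = k$ is the span of empty products, i.e., of $0$-tuples of elements of $L$. For the step, the defining equation $F_nUL = \mu(L \otimes_k F_{n-1}UL)$, together with the inclusion $F_{n-1}UL \subseteq F_nUL$ built into the intended reading of the definition, combined with the inductive hypothesis on $F_{n-1}UL$, gives that every element of $F_nUL$ is a linear combination of expressions $\ell \cdot (\ell_1 \cdots \ell_m)$ with $m \leq n-1$, i.e., products of at most $n$ elements of $L$. The reverse containment is immediate from the same defining equation. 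Part (1) for $WL$ follows immediately: by definition $\tilde{F}_nWL$ is the $kG$-linear span of the image of $F_nUL$ in $WL$, so combining with the description of $F_nUL$ and using that the $kG$-action on $WL$ is by left multiplication by grouplikes (via the rigid unit map $\eta \colon kG \rightarrow WL$), every element of $\tilde{F}_nWL$ is a $k$-linear combination of products of grouplikes and $(\leq n)$-tuples of elements of $L \subseteq WL$.

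For part (2), observe that a morphism $f \colon L \rightarrow L'$ of $G$-suspensive Lie $k$-algebras is in particular a Lie algebra morphism, so by functoriality of $U$ induces $Uf \colon UL \rightarrow UL'$, which sends $F_nUL$ into $F_nUL'$ by an easy induction using $f(L) \subseteq L'$. The functor $W$ then produces $Wf \colon WL \rightarrow WL'$, a $G$-equivariant (hence $kG$-linear) bialgebra morphism compatible with the quotient relations $g \odot \ell - 1 \odot (g\cdot \ell)$. Using the explicit description from part (1), $Wf$ sends products of grouplikes and $(\leq n)$-tuples of elements of $L$ to products of grouplikes and $(\leq n)$-tuples of elements of $L'$, so $Wf(\tilde{F}_nWL) \subseteq \tilde{F}_nWL'$. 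Passing to associated gradeds yields the map $\SLieFilt f \colon \SLieFilt WL \rightarrow \SLieFilt WL'$.

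The only point that requires any care, rather than a genuine obstacle, is verifying that $\SLieFilt f$ respects the bialgebra structure, which reduces to checking that the Lie filtration on $WL$ is multiplicative and that the coproduct satisfies $\Delta(\tilde{F}_nWL) \subseteq \sum_{i+j=n} \tilde{F}_iWL \otimes_k \tilde{F}_jWL$. Multiplicativity is immediate from part (1), since the product of a $(\leq m)$-tuple and a $(\leq n)$-tuple in $L$ is a $(\leq m{+}n)$-tuple, and grouplikes commute past elements of $L$ up to the action of $G$. The coproduct compatibility follows by reducing, via part (1), to the formulas $\Delta(g \odot 1) = (g\odot 1) \otimes (g \odot 1)$ and $\Delta(1 \odot \ell) = (1\odot \ell) \otimes (Q\odot 1) + (Q \odot 1) \otimes (1\odot \ell)$ from Definition-Proposition \ref{def of W}, each of which lies in $\tilde{F}_0 \otimes \tilde{F}_0$ or $\tilde{F}_0 \otimes \tilde{F}_1 + \tilde{F}_1 \otimes \tilde{F}_0$ respectively.
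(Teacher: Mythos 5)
The paper offers no proof of this Observation, asserting both parts as classical or straightforward, and your argument is precisely the intended direct verification: unwind the inductive definition of the Lie filtration, push it through the quotient defining $WL$, and check multiplicativity and coproduct compatibility on the generators $g\odot 1$ and $1\odot\ell$. Your proof is correct, including the careful note that the increasing filtration is to be read as $F_nUL = F_{n-1}UL + \mu(L\otimes_k F_{n-1}UL)$, so nothing further is needed.
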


Let $L$ be a Lie algebra. In section 5 of \cite{MR0174052}, Milnor and Moore define 
$L^{\#}$ to be the underlying $k$-vector space of $L$ equipped with the trivial Lie bracket. Then $\LieFilt UL$ is isomorphic to $\LieFilt U(L^{\#})$.
We now consider an analogue in the suspensive setting:
\begin{definition}
Suppose we are given a commutative monoid $G$, a field $k$, and a $G$-suspensive Lie $k$-algebra $L$.
Let $L^{\tilde{\#}}$ denote the abelian $G$-suspensive Lie $k$-algebra with the same underlying $G$-suspensive vector space as $L$. That is, $L^{\tilde{\#}}$ is the $k$-vector space underlying $L$, with the same $G$-grading as $L$, the same $G$-action as $L$, and with zero Lie bracket.
\end{definition}

In Proposition \ref{PBW-like lemma 2} and elsewhere, when we deal with $\SLieFilt W(L)$, it pays to use terminology which clearly distinguishes the two gradings on $\SLieFilt W(L)$: there is a $G$-grading coming from the fact that $L$ is $G$-suspensive, and there is a $\mathbb{N}$-grading coming from the fact that $\SLieFilt W(L)$ is the associated graded of the Lie filtration. Whenever there is risk of confusion, we will refer to degrees in the first grading as {\em suspensive degree} and degrees in the second grading as {\em Lie degree.}

One form (see Theorems 5.15 and 5.16 of \cite{MR0174052}) of the classical Poincar\'{e}-Birkhoff-Witt theorem states that, if $L$ is a Lie algebra over a field $k$ of characteristic zero, then $\LieFilt UL$ is isomorphic to the symmetric $k$-algebra $S_k(L^{\#})$ on the underlying $k$-vector space $L^{\#}$ of $L$. Here is the relevant $G$-suspensive version:
\begin{prop}\label{PBW-like lemma 2}
Let $G$ be a commutative monoid, let $k$ be a field of characteristic zero, and let $L$ be a $G$-suspensive Lie $k$-algebra. 
Regard the underlying $G$-suspensive $k$-vector space $L^{\#}$ of $L$ as a $kG$-module via the action of $G$ on $L$.
Equip the symmetric $kG$-algebra $S_{kG}(L^{\#})$ with the structure of a $G$-rigid $k$-bialgebra, by letting the elements of $L^{\#}\subseteq S_{kG}(L^{\#})$ in suspensive degree $Q$ be $Q$-primitives, and letting the elements $G\subseteq kG\subseteq S_{kG}(L^{\#})$ in degree $0$ be grouplike. 
Then we have an isomorphism of $G$-rigid $k$-bialgebras
\[ \SLieFilt WL \cong S_{kG}(L^{\#}).\]
\end{prop}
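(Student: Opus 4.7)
The plan is to construct a natural map $\psi: S_{kG}(L^{\#}) \to \SLieFilt WL$ of $G$-rigid $k$-bialgebras and show it is an isomorphism, with injectivity deduced from the classical Poincar\'{e}-Birkhoff-Witt theorem.

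First I would define $\psi$ on generators $\ell \in L^{\#}$ of suspensive degree $Q$ by $\psi(\ell) = \overline{1 \odot \ell}$, the class of $1 \odot \ell$ in $\tilde{F}_1 WL / \tilde{F}_0 WL$. The defining relation $g \odot \ell = 1 \odot (g \cdot \ell)$ in $WL$ makes this $kG$-linear. Since $[1 \odot \ell, 1 \odot \ell'] = 1 \odot [\ell, \ell']$ already lies in $\tilde{F}_1 WL$, its class in $\tilde{F}_2 WL/\tilde{F}_1 WL$ vanishes, so the images of Lie-degree-$1$ elements commute in $\SLieFilt WL$; by the universal property of the symmetric $kG$-algebra, $\psi$ extends to a $kG$-algebra map. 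The coproduct formula from Definition-Proposition \ref{def of W} shows $\psi(\ell)$ is $Q$-primitive in $\SLieFilt WL$, and elements of $G \subseteq kG$ are grouplike on both sides, so $\psi$ is a morphism of $G$-rigid $k$-bialgebras.

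Surjectivity of $\psi$ is immediate: by construction $\tilde{F}_n WL$ is the $kG$-linear span of products of at most $n$ elements of $L \subseteq WL$, so $\SLieFilt WL$ is generated as a $kG$-algebra in Lie degree $1$ by the classes of $1 \odot \ell$, all of which lie in the image of $\psi$.

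For injectivity I would introduce the auxiliary $k$-algebra surjection $\tilde{\pi}: kG \otimes_k UL \twoheadrightarrow WL$, $g \otimes u \mapsto g \odot u$, which is well-defined because $kG$ is central in $WL$. Filter the source by $kG \otimes_k F_n UL$; then $\tilde{\pi}$ respects filtrations by the very definition of $\tilde{F}_n WL$ as the $kG$-span of the image of $F_n UL$. Classical PBW ($\LieFilt UL \cong S_k(L^{\#})$ in characteristic zero) yields a surjective $kG$-algebra map
\[ kG \otimes_k S_k(L^{\#}) \;\cong\; S_{kG}(kG \otimes_k L^{\#}) \twoheadrightarrow \SLieFilt WL, \]
which factors through the natural quotient $S_{kG}(kG \otimes_k L^{\#}) \twoheadrightarrow S_{kG}(L^{\#})$ that imposes $g \otimes \ell = 1 \otimes (g \cdot \ell)$, and the induced map on $S_{kG}(L^{\#})$ is precisely $\psi$. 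The hard part is to verify that the associated graded of $\ker\tilde{\pi}$ coincides with the ideal generated by the degree-$1$ relations $g \otimes \ell - 1 \otimes (g \cdot \ell)$, with no hidden relations in higher Lie degree. I would settle this by choosing a homogeneous ordered $k$-basis $(e_i)_{i \in I}$ of $L$, applying classical PBW to get an ordered-monomial $k$-basis of $UL$, and using the relation in $WL$ to rewrite each element $g \odot e_{i_1} \cdots e_{i_n}$ (for $n \geq 1$) as $1 \odot (g \cdot e_{i_1}) e_{i_2} \cdots e_{i_n}$; a $k$-linear independence check in each Lie degree then matches the resulting spanning set of $\SLieFilt_n WL$ with the canonical $k$-basis of $S_{kG}^n(L^{\#})$, completing the argument.
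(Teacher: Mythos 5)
Your overall architecture is sound and tracks the paper's proof fairly closely: the paper likewise passes to the associated graded of the Lie filtration (so that commutators, which drop Lie degree, disappear), reduces to the abelian case $L^{\#}$, and identifies the result with the symmetric $kG$-algebra; your explicit comparison map $\psi$, the surjectivity argument, and the reduction of injectivity to the claim that the associated graded of $\ker\bigl(kG\otimes_k UL \twoheadrightarrow WL\bigr)$ is generated by the Lie-degree-$1$ relations $g\otimes\ell - 1\otimes(g\cdot\ell)$ are all correct, and you have correctly isolated the one nontrivial point.

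The gap is in the method you propose for that point. You plan to rewrite the spanning set $\{\,\overline{g\odot e_{i_1}\cdots e_{i_n}}\,\}$ of $\SLieFilt_n WL$ and then perform ``a $k$-linear independence check'' matching it against ``the canonical $k$-basis of $S^n_{kG}(L^{\#})$.'' But $S^n_{kG}(L^{\#})$ has no canonical $k$-basis: $L^{\#}$ is a $kG$-module that is in general neither free nor projective over $kG$ --- the paper stresses exactly this in the introduction to explain why its setting is not covered by Milnor--Moore's results for Lie algebras projective over a ring --- and symmetric powers over $kG$ of such a module are precisely as hard to present over $k$ as the quotient $\SLieFilt_n WL$ you are trying to compute. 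Your rewriting does produce a $k$-spanning set of $\SLieFilt_n WL$ indexed by $G$ times ordered monomials, but the $k$-linear relations among those classes are the unknown quantity, and there is no independently known basis on the symmetric-algebra side to compare against. (If $L^{\#}$ were free over $kG$, the classical PBW theorem for Lie algebras free over a commutative ring would apply and your basis argument would close; but that is exactly the case the paper is not interested in.) To finish one needs a basis-free argument, for instance by observing that $WL\cong U_{kG}(L)$ for the Lie $kG$-algebra $L$ and establishing a PBW statement over the commutative $\mathbb{Q}$-algebra $kG$ for not-necessarily-projective modules, e.g.\ via the characteristic-zero symmetrization map $S_{kG}(L^{\#})\rightarrow WL$ --- which is also where the hypothesis that $k$ has characteristic zero genuinely enters. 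For comparison, the paper's own proof does not supply such an argument either: it asserts directly that in Lie degree $n$ the only relations are those enforcing multilinearity of the $G$-action. So your proposal mirrors the paper's strategy, but the concrete linear-independence step you describe cannot be executed as written.
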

\begin{proof}
Clearly $\SLieFilt WL \cong \SLieFilt W(L^{\#})$ by the definition of the Lie filtration on $WL$. In Lie degree $0$, $\SLieFilt W(L^{\#})$ is the image of the rigid unit map, while in Lie degree $1$, $\SLieFilt W(L^{\#})$ is a copy of $kG\otimes_{kG} L^{\#} \cong L^{\#}$. In Lie degree $n\geq 1$, $\SLieFilt W(L^{\#})$ consists of the $k$-linear combinations of formal products of $n$ elements of $L^{\#}$, modulo the relations enforcing multilinearity of the $G$-action, i.e.,
\begin{align} 
\label{multilin eq 1} Q\cdot (\ell_1\cdot \dots \cdot\ell_n) 
  &= (Q\ell_1)\cdot \ell_2\cdot \dots \cdot\ell_n \\
\label{multilin eq 2}  &= \ell_1\cdot (Q\ell_2)\cdot \dots \cdot\ell_n \\
\nonumber  &= \dots \\
\label{multilin eq 3}  &= \ell_1\cdot \ell_2\cdot \dots \cdot (Q\ell_n) .\end{align}
This is precisely the $n$th symmetric power of the $kG$-module $L^{\#}$.
\end{proof}

\begin{corollary}\label{PBW-like corollary}
Let $G$ be a commutative monoid, let $k$ be a field of characteristic zero, and let $L$ be a $G$-suspensive Lie $k$-algebra. 
Then the canonical map $L\rightarrow WL$ is injective. 
\end{corollary}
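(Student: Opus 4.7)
The plan is to bootstrap injectivity of $L \to WL$ directly off of Proposition \ref{PBW-like lemma 2}, by looking at what happens in the degree-$1$ piece of the associated graded of the Lie filtration. Concretely, let $\iota: L \to WL$ denote the canonical map $\ell \mapsto 1 \odot \ell$. By the construction of the Lie filtration in Definition \ref{def of lie filt}, $\iota$ factors through $\tilde{F}_1 WL$, so we can post-compose with the projection onto Lie degree $1$ to obtain a $k$-linear map
\[ \bar{\iota}: L \longrightarrow \tilde{F}_1 WL / \tilde{F}_0 WL = \left(\SLieFilt WL\right)_1. \]
If $\bar{\iota}$ is injective, then $\iota$ is injective as well, which is what we want.

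To see that $\bar{\iota}$ is injective, I would invoke Proposition \ref{PBW-like lemma 2}, which identifies $\SLieFilt WL$ with the symmetric $kG$-algebra $S_{kG}(L^{\#})$ as $G$-rigid $k$-bialgebras. Under this isomorphism, the Lie-degree-$1$ summand of $\SLieFilt WL$ corresponds to $S_{kG}^{1}(L^{\#}) = L^{\#}$, as noted explicitly in the proof of that proposition ($kG \otimes_{kG} L^{\#} \cong L^{\#}$). Tracing through how the isomorphism is constructed, one sees that the composite
\[ L \xrightarrow{\bar{\iota}} \left(\SLieFilt WL\right)_1 \xrightarrow{\cong} L^{\#} \]
is just the identity map on the common underlying $k$-vector space of $L$ and $L^{\#}$. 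This composite is therefore manifestly injective, so $\bar{\iota}$ is injective, and hence so is $\iota$.

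There is no real obstacle here; the corollary is essentially a direct consequence of the PBW-like isomorphism in Proposition \ref{PBW-like lemma 2}, together with the observation that the degree-$1$ part of the symmetric algebra $S_{kG}(L^{\#})$ is a copy of $L^{\#}$ in which no nonzero element of $L$ can be killed. The only mild check is that the isomorphism of Proposition \ref{PBW-like lemma 2} identifies $\bar{\iota}$ with the identity on $L^{\#}$, but this is immediate from the construction of that isomorphism, which sends a class $\ell + \tilde{F}_0 WL$ to $\ell \in L^{\#}$ tautologically.
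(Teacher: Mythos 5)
Your proposal is correct and takes essentially the same route as the paper: both arguments pass to the associated graded of the Lie filtration, invoke Proposition \ref{PBW-like lemma 2} to identify $\SLieFilt WL$ with the symmetric algebra $S_{kG}(L^{\#})$, and conclude from the fact that $L$ maps isomorphically onto the Lie-degree-$1$ summand. The only cosmetic difference is that the paper's commutative diagram also routes through $\LieFilt UL$ and the classical PBW isomorphism $S_k(L^{\#})$, which you bypass without loss.
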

\begin{proof}
We have the commutative diagram of $k$-vector spaces
\[\xymatrix{
 L \ar[r] 
  & \LieFilt UL \ar[r]\ar[d]^{\cong} & \SLieFilt WL \ar[d]^{\cong} \\
  & \LieFilt U(L^{\#}) \ar[r]\ar[d]^{\cong} & \SLieFilt W(L^{\tilde{\#}}) \ar[d]^{\cong} \\
  & S_k(L^{\#}) \ar[r] & S_{kG}(L^{\tilde{\#}})
}\]
and the composite $L \rightarrow S_{kG}(L^{\tilde{\#}})$ is an isomorphism onto the (Lie) degree $1$ summand in $S_{kG}(L^{\tilde{\#}})$. So the composite $L \rightarrow \SLieFilt WL\rightarrow S_{kG}(L^{\tilde{\#}})$ is one-to-one. So $L \rightarrow \SLieFilt WL$ is one-to-one, so $L \rightarrow WL$ is one-to-one.
\end{proof}

\begin{definition}\label{def of torsion-free action}
Suppose we are given a commutative monoid $G$ and a field $k$. 
\begin{itemize}
\item We say that a $k$-linear action of $G$ on a $k$-vector space $V$ is {\em strictly torsion-free} if, for every $v\in V$ and every $Q\in G$ such that $Qv = 0$, we have $v=0$.
\item Suppose that $V$ is a $G$-suspensive $k$-vector space. We say that an element $x\in V$ in degree $Q$ is {\em torsion} if $Q\cdot x=0$.
\item We say that a $G$-suspensive $k$-vector space $V$ is {\em torsion-free}\footnote{As far as we know, this particular notion of torsion-freeness is not the same as that studied anywhere else in the literature, including that studied in \cite{MR1967707}.} if the only torsion element of $V$ is zero. That is, $V$ is torsion-free if and only if, for every $Q\in G$ and every $v\in V_Q$ such that $Qv=0$, we have $v=0$.
\item At the opposite extreme, we say that a $G$-suspensive $k$-vector space $V$ is {\em torsion} if every element of $V$ is torsion.
\end{itemize}
\end{definition}
Note that the definition of a strictly torsion-free action of $G$ requires only an action of $G$, while the definition of a torsion-free action of $G$ requires both an action of $G$ and a $G$-grading on $V$, i.e., a $G$-suspensive structure on $V$. 

It is clear that, if a $G$-suspensive $k$-vector space is strictly torsion-free, then it is also torsion-free. It is not difficult to come up with counterexamples to the converse claim.


The action of $G$ on a torsion-free $G$-suspensive $k$-vector space $V$ may be far from free. The torsion-freeness condition enforces that $Q\in G$ acts faithfully on degree $Q$ in $V$, but $Q$ may act quite non-faithfully on other degrees in $V$. Consequently the remarks following Definition \ref{def of suspensive lie algebras} apply here as well: torsion-free $G$-suspensive Lie $k$-algebras are not necessarily free (or projective) as $kG$-modules, so general results about Lie $R$-algebras projective over $R$ (as in \cite{MR0174052}) do not apply to general torsion-free $G$-suspensive Lie $k$-algebras.

\begin{prop}\label{L and GPWL in torsion-free case}
Let $G$ be a commutative monoid, let $k$ be a field of characteristic zero, and let $L$ be a {\em torsion-free} abelian $G$-suspensive Lie $k$-algebra. 
Then the natural inclusion $L \rightarrow GP_*(WL)$ is an isomorphism.
\end{prop}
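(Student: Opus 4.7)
Injectivity of $L \to GP_*(WL)$ is immediate from Corollary \ref{PBW-like corollary}: the natural map $L \to WL$ is one-to-one and factors through $L \to GP_*(WL)$. For surjectivity, the strategy is to use that, because $L$ is abelian, the defining relations of $WL$ collapse $T(L)$ to its commutative quotient, so $WL$ is precisely the symmetric $kG$-algebra $S_{kG}(L^\#)$. This puts two compatible gradings on $WL$: a $\mathbb{Z}_{\geq 0}$-grading by symmetric-power degree (``Lie degree''), and the $G$-grading inherited from $L^\#$. A direct check on generators shows that $\Delta$ respects the Lie grading in the sense $\Delta\colon S^n_{kG}(L^\#) \to \bigoplus_{i+j=n} S^i_{kG}(L^\#)\otimes_k S^j_{kG}(L^\#)$, and respects the $G$-grading in the sense $\Delta\colon WL_{Q'} \to WL_{Q'}\otimes_k WL_{Q'}$ for each $Q' \in G$.

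Given $x \in GP_Q(WL)$, decomposing $x$ along these bigradings and matching bidegree components of $\Delta(x) = Q\otimes x + x\otimes Q$, one reduces to the case $x \in S^n_{kG}(L^\#)_Q$ for a single $n$. For $n=0$, $x$ is a scalar multiple of $Q$, and comparing $\Delta(cQ) = cQ\otimes Q$ against $cQ\otimes Q + cQ\otimes Q = 2cQ\otimes Q$ forces $c=0$. For $n=1$, $x \in L_Q$, already in the image of the natural map. So the remaining content of the proposition is to rule out $Q$-primitives in $S^n_{kG}(L^\#)_Q$ for $n \geq 2$, and this is the place where characteristic zero and torsion-freeness are essential.

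The plan for the $n \geq 2$ case is to apply the iterated coproduct $\Delta^{n-1}$ and compare its projection onto the multilinear summand $(L^\#)^{\otimes_k n} \subseteq WL^{\otimes_k n}$ computed in two ways. On one side, coassociativity and $Q$-primitivity give $\Delta^{n-1}(x) = \sum_{i=1}^{n} Q^{\otimes(i-1)} \otimes x \otimes Q^{\otimes(n-i)}$, whose projection onto $(L^\#)^{\otimes n}$ vanishes because each term concentrates the entire Lie degree of $x$ in a single slot. On the other side, an explicit expansion of $\prod_j \Delta^{n-1}(\ell_j)$ shows that, for a monomial $\ell_1 \cdots \ell_n$ with $\ell_j \in L_{R_j}$ and $R_1 \cdots R_n = Q$, the multilinear component equals $\sum_{\tau \in S_n} f(\tau(1)) \otimes \cdots \otimes f(\tau(n))$, where $f(j) = (R_1 \cdots \widehat{R_j} \cdots R_n)\cdot \ell_j \in L^\#_Q$. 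This is $n!$ times a symmetric tensor, so in characteristic zero its vanishing is equivalent to the vanishing of $f(1) \cdots f(n)$ in $S^n_k(L^\#_Q)$. Extending linearly, one obtains a $k$-linear map $\psi\colon S^n_{kG}(L^\#)_Q \to S^n_k(L^\#_Q)$, and the argument reduces to proving that $\psi$ is injective. I expect this to be the main technical obstacle: the torsion-freeness hypothesis on $L$ enters precisely here, to control the twisting action of the products $R_1 \cdots \widehat{R_j} \cdots R_n$ on the corresponding $L^\#$-components and to rule out the spurious cancellations that would otherwise yield nontrivial higher Lie-degree generalized primitives.
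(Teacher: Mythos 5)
Your setup (the identification $WL \cong S_{kG}(L^{\#})$, the bigrading reduction, and the cases $n=0,1$) is fine, and your computation of the multilinear component of $\Delta^{n-1}$ is correct. But the step you defer --- injectivity of $\psi\colon S^n_{kG}(L^{\#})_Q \to S^n_k(L^{\#}_Q)$, sending $\ell_1\cdots\ell_n$ (with $\ell_j\in L_{R_j}$) to $\prod_j \bigl(\textstyle\prod_{i\neq j} R_i\bigr)\cdot\ell_j$ --- is a genuine gap, not a technicality, and it cannot be closed from the stated hypotheses. Torsion-freeness of $L$ only controls the action of $R_j$ on the degree-$R_j$ component; your $\psi$ multiplies each $\ell_j$ by the product of the degrees of the \emph{other} factors, about which the hypothesis says nothing. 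Concretely: let $G$ be the free commutative monoid on $Q$, and let $L$ be abelian with $L_1 = k\{a\}$, $L_Q = 0$, $L_{Q^j} = k\{b_j\}$ for $j\geq 2$, with $Q\cdot b_j = b_{j+1}$ and (forcedly, since $L_Q=0$) $Q\cdot a = 0$. This $L$ is torsion-free in the sense of Definition \ref{def of torsion-free action}, since $1\cdot a = a\neq 0$ and $Q^j\cdot b_j = b_{2j}\neq 0$. Yet $ab_2$ is a nonzero element of $S^2_{kG}(L)_{Q^2}$ with $\psi(ab_2) = (Q^2\cdot a)\cdot(1\cdot b_2) = 0$; and indeed $\Delta(ab_2) = ab_2\otimes Q^2 + Q^2\otimes ab_2$ because $Q^2 a = 0$, so $ab_2$ is a $Q^2$-primitive of Lie degree $2$ not lying in $L$. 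So your (correct) earlier steps show only that a spurious primitive lies in $\ker\psi$, and $\ker\psi$ need not be zero.

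For comparison, the paper's proof takes a shorter route: it applies the multiplication map $\mu\colon WL\otimes_k WL\to WL$ to both sides of the single primitivity equation $\Delta(f) = Q\otimes f + f\otimes Q$, obtaining $2Q\cdot f_n = 2^n\, Q\cdot f_n$ on each Lie-degree-$n$ component and hence, in characteristic zero, $Q\cdot f_n = 0$ for $n\neq 1$; it then concludes $f_n = 0$ by asserting that an element of $S^n_{kG}(L)$ of suspensive degree $Q$ annihilated by $Q$ must vanish when $L$ is torsion-free. That final assertion is essentially the same claim as the injectivity you needed (your condition $\psi(x)=0$ implies $Q\cdot x=0$ upon applying multiplication), so you have correctly isolated the crux of the proposition; but neither your iterated-coproduct route nor the paper's one-step shortcut supplies it without a stronger hypothesis (e.g.\ strict torsion-freeness in the sense of Definition \ref{def of torsion-free action}, which the example above violates since $Q\cdot a = 0$ with $a\neq 0$).
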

\begin{proof}
Since $L$ is abelian, by Proposition \ref{PBW-like lemma 2} we have $GP_*WL \cong GP_*S_{kG}(L)$. Choose a $k$-linear basis $\{ x_i: i\in I\}$ for $L$ consisting of elements homogeneous with respect to the $G$-grading.
A homogeneous element of $S_{kG}(L)$ of Lie degree $0$ is simply an element of $kG$. An element of $S_{kG}(L)$ of Lie degree $n>0$ is a polynomial of degree $n$ in the variables $\{ x_i: i\in I\}$, modulo the relations \eqref{multilin eq 1} through \eqref{multilin eq 3} enforcing multilinearity of the $G$-action. We use the symbol $\vec{x}$ to denote such a monomial in $S_{kG}(L)$.

Similarly, an element of $S_{kG}(L) \otimes_k S_{kG}(L)$ is describable as a sum of products of elements $Q^{\ell} = Q\otimes 1$ and $Q^r = 1\otimes Q$ and $\vec{x}^{\ell} = \vec{x}\otimes 1$ and $\vec{x}^{r} = 1\otimes \vec{x}$.

From this perspective, the coproduct on $S_{kG}(L)$ sends a polynomial $f(\vec{x})$ to 
\[ f\left(\left| \vec{x}\right|^{\ell}\cdot \vec{x}^r + \left| \vec{x}\right|^{r}\cdot \vec{x}^{\ell}\right),\]
i.e., the same polynomial $f$ but with each instance of $x_i$ replaced by $\left| x_i\right|^{\ell}\cdot x_i^r + \left| x_i\right|^{r}\cdot x_i^{\ell}$, where $\left| x_i\right|$ is the suspensive degree of $x_i$.
Consequently, if $f$ is a $Q$-primitive, then
\begin{align}
\label{Q two-sided eq 1} 
 Q^{\ell}f(\vec{x}^r) + Q^rf(\vec{x}^{\ell}) 
  &= f\left(\left| \vec{x}\right|^{\ell}\cdot \vec{x}^r + \left| \vec{x}\right|^{r}\cdot \vec{x}^{\ell}\right).
\end{align}
Applying the multiplication map $S_{kG}(L) \otimes_k S_{kG}(L) \rightarrow S_{kG}(L)$ to \eqref{Q two-sided eq 1}
yields the equation
\begin{align}
\label{Q two-sided eq 2} 
 2Q\cdot f(\vec{x})
  &= f\left(2\left| \vec{x}\right|\cdot \vec{x}\right).
\end{align}
Both the coproduct and the $G$-action on $S_{kG}(L)$ are homogeneous with respect to the Lie grading, so if \eqref{Q two-sided eq 2} is true, then it must be true of each Lie-degree-homogeneous summand on each side. In other words, if we let $f_n$ denote the sum of the monomials of Lie degree $n$ in $f$, then we must have 
\begin{align}
\label{Q two-sided eq 3} 
 2Q\cdot f_n(\vec{x})
  &= f_n\left(2\left| \vec{x}\right|\cdot \vec{x}\right) \\
\label{Q two-sided eq 3a} 
  &= 2^n\left| \vec{x}\right|^n f_n\left(\vec{x}\right) .
\end{align}

Since the $G$-action on $S_{kG}(L)$ respects the $G$-grading on $L$, the only way for \eqref{Q two-sided eq 3} and \eqref{Q two-sided eq 3a} to be satisfied is for $Q$ to be equal to $\left| \vec{x}\right|^n$. 
So we finally have 
\begin{align}
\label{Q two-sided eq 4} 
  0 &= (2^n-2)\left| \vec{x}\right|^n f_n.
\end{align}
Since $k$ is a field of characteristic zero, the only way for \eqref{Q two-sided eq 4} to be satisfied is for either $n$ to be equal to $1$, or for $f_n$ to be annihilated by $\left| \vec{x}\right|^n$. But $L$ is torsion-free, and $f_n(\vec{x})$ is in suspensive degree $\left| \vec{x}\right|^n$. So, unless $n=1$, the only way for $f_n$ to be annihilated by $\left| \vec{x}\right|^n$ is if $f_n=0$. 

Consequently the only nonzero $Q$-primitives in $W(L) \cong S_{kG}(L)$ are in Lie degree $1$, i.e., they are elements of $L\subseteq WL$.
\end{proof}

\begin{definition}\label{def of tf bialgebra}
Let $G$ be a commutative monoid and let $k$ be a field. A $G$-rigid $k$-bialgebra $A$ is said to be {\em torsion-free} if, for every $Q\in G$ and every $Q$-primitive $a\in A$ such that $Q\cdot a = 0$, we have $a=0$.
\end{definition}
In other words, a $G$-rigid $k$-bialgebra $A$ is torsion-free if and only if the $G$-suspensive $k$-vector space $GP_*A$ is torsion-free.

We now introduce some abbreviations for various categories of objects we have considered in this section, and functors between them:
\begin{itemize}
\item
We denote by $\GPGen\Rig_G\Bialg(k)$ the full subcategory of $\Cocomm\Rig_G\Bialg(k)$ whose objects are the generalized-primitively-generated bialgebras.
\item
Similarly, we denote by $\TF\GPGen\Rig_G\Bialg(k)$ the full subcategory of 
$\Cocomm\Rig_G\Bialg(k)$ whose objects are the torsion-free generalized-primitively-generated bialgebras.
\item
We denote by $\TF\Susp_G\Lie(k)$ the full subcategory of $\Susp_G\Lie(k)$ generated by the torsion-free $G$-suspensive Lie $k$-algebras.
\end{itemize}
\begin{theorem}\label{mm thm char 0} Let $k$ be a field of characteristic zero, and let $G$ be a 
commutative monoid. 
Then the functors $GP_*$ and $W$, with their domains and codomains restricted as follows:
\begin{align}
\label{eq 09211} W: \TF\Susp_G\Lie(k)&\rightarrow \TF\GPGen\Rig_G\Bialg(k)\\
\label{eq 09212} GP_*: \TF\GPGen\Rig_G\Bialg(k)&\rightarrow \TF\Susp_G\Lie(k)
\end{align}
are mutually inverse. Consequently the category of torsion-free $G$-suspensive Lie $k$-algebras is equivalent to the category of torsion-free generalized-primitively-generated $G$-rigid $k$-bialgebras.
\end{theorem}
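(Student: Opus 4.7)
The plan is to verify that the unit and counit of the adjunction $W \dashv GP_*$ from Proposition \ref{W and GP adjunction} are isomorphisms when restricted to the subcategories indicated. This requires checking four things: (i) if $L$ is torsion-free, then $WL$ is torsion-free and generalized-primitively-generated; (ii) the unit $\eta_L : L \to GP_*(WL)$ is an isomorphism for torsion-free $L$; (iii) if $A$ is torsion-free generalized-primitively-generated, then $GP_*A$ is torsion-free; and (iv) the counit $\epsilon_A : W(GP_*A) \to A$ is an isomorphism. Points (i) and (iii) are nearly automatic: $WL$ is always generated by grouplikes and generalized primitives by construction, while (iii) is literally Definition \ref{def of tf bialgebra}. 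Torsion-freeness of $WL$ in (i) will fall out of (ii), since $GP_*(WL) \cong L$.

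The main work is (ii). The approach is to use the Lie filtration $\tilde F_\bullet WL$ of Definition \ref{def of lie filt}. By Proposition \ref{PBW-like lemma 2}, $\SLieFilt WL \cong S_{kG}(L^{\tilde\#})$, where $L^{\tilde\#}$ is the underlying abelian $G$-suspensive Lie algebra of $L$. If $L$ is torsion-free, so is $L^{\tilde\#}$, and Proposition \ref{L and GPWL in torsion-free case} applied to the abelian torsion-free $L^{\tilde\#}$ identifies $GP_*(W(L^{\tilde\#})) = GP_*(S_{kG}(L^{\tilde\#}))$ with the Lie-degree-$1$ summand $L^{\tilde\#}$. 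Now given a $Q$-primitive $a \in WL$ of minimal Lie filtration $n$, its class $\bar a \in \SLieFilt WL$ is a nonzero $Q$-primitive of Lie degree $n$, since the Lie filtration respects the coproduct. The preceding sentence forces $n \leq 1$, so $a \in \tilde F_1 WL$. By the definition of the Lie filtration and Corollary \ref{PBW-like corollary}, $\tilde F_1 WL = kG \oplus L$. Writing $a = c + \ell$ with $c \in kG$ and $\ell \in L$, the $kG \otimes kG$ summand of the equation $\Delta(a) = a \otimes Q + Q \otimes a$ forces $c = 0$ by comparing coefficients of $g \otimes g'$, and decomposing $\ell$ homogeneously via its $G$-grading then forces $\ell \in L_Q$. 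Thus $GP_Q(WL) = L_Q$, which is precisely the statement that $\eta_L$ is an isomorphism.

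For (iv), surjectivity of $\epsilon_A$ is immediate from the hypothesis that $A$ is generalized-primitively-generated: its image contains every grouplike and every generalized primitive, which generate $A$ as a $k$-algebra. For injectivity, the triangle identity for the adjunction gives that the composite
\[ GP_*A \xrightarrow{\eta_{GP_*A}} GP_*W(GP_*A) \xrightarrow{GP_*(\epsilon_A)} GP_*A \]
is the identity. Since $GP_*A$ is a torsion-free $G$-suspensive Lie algebra by (iii), step (ii) shows $\eta_{GP_*A}$ is an isomorphism, hence so is $GP_*(\epsilon_A)$. Since $W(GP_*A)$ is generalized-primitively-generated by construction, Proposition \ref{GP and injectivity} applies and gives that $\epsilon_A$ is injective. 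Combining, $\epsilon_A$ is an isomorphism, completing the proof that $W$ and $GP_*$ restrict to mutually inverse equivalences between the two indicated categories.

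The main obstacle is step (ii): specifically, the use of the Lie filtration to force every generalized primitive of $WL$ into $\tilde F_1 WL$. This is the only step where the torsion-freeness hypothesis is essential, since it is what lets us apply Proposition \ref{L and GPWL in torsion-free case} to $L^{\tilde\#}$ and conclude that no primitives lurk in higher Lie degrees of the symmetric algebra; Example \ref{non-tf example} (referenced earlier) shows that without torsion-freeness this step genuinely fails. The remaining calculation inside $\tilde F_1 WL = kG \oplus L$ is elementary linear algebra, and (i), (iii), (iv) are formal consequences of (ii) together with Proposition \ref{GP and injectivity}.
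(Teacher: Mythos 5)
Your proposal is correct and follows essentially the same route as the paper's proof: both reduce to the associated graded of the Lie filtration via Proposition \ref{PBW-like lemma 2}, invoke Proposition \ref{L and GPWL in torsion-free case} to force generalized primitives of $WL$ into Lie filtration $1$, rule out the $kG$ component by the same coefficient comparison in $\Delta(a)=a\otimes Q+Q\otimes a$, and then deduce that the counit is injective from Proposition \ref{GP and injectivity} and surjective from generalized-primitive generation. The only differences are presentational (you phrase the second half via the triangle identity of the adjunction rather than as ``left and right inverse functors''), so there is nothing substantive to add.
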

\begin{proof}
Let $L$ be a torsion-free $G$-suspensive Lie $k$-algebra. The canonical map $L \rightarrow GP_*(WL)$ is injective by Corollary \ref{PBW-like corollary}.
We need to show that it is also surjective. Suppose that $x$ is a $Q$-primitive in $WL$. Passing to the associated graded of the Lie filtration on $WL$, we have that $x$ represents a $Q$-primitive $\overline{x}$ in 
\begin{equation}
\label{isos 049}
\SLieFilt (WL) = \SLieFilt W(L^{\#}) =W(L^{\#}),
\end{equation}
with \eqref{isos 049} due to Proposition \ref{PBW-like lemma 2}. By Proposition \ref{L and GPWL in torsion-free case}, $\overline{x}$ is in Lie filtration $1$ in $W(L^{\#}) = \SLieFilt WL$. So $x$ is in Lie filtration $1$ in $WL$. 

Lie filtration $1$ in $WL$ consists of $k$-linear combinations of elements in $L$ and elements in the image of the rigid unit map $kG\rightarrow WL$. The elements of $L$ are certainly generalized primitives. If $x = \sum_g \alpha_gg + \sum_g \ell_g$ for some $\sum_g \alpha_gg\in kG$ and for some $\ell_g\in L_g$ for each $g\in G$, then since $x$ is assumed to be a $Q$-primitive, we have
\begin{align}
\label{eq 0091}  \sum_g \left( \alpha_g Q \otimes g + Q\otimes \ell_g + \alpha_g g\otimes Q + \ell_g \otimes Q\right) 
  &= Q \otimes x + x \otimes Q \\
\nonumber  &= \Delta(x) \\
\label{eq 0092}  &= \sum_g \alpha_g g\otimes g + \sum_g \left( g\otimes \ell_g + \ell_g\otimes g\right).  
\end{align}
Reading off the $Q\otimes Q$ terms from the left-hand side of \eqref{eq 0091} and from \eqref{eq 0092}, we have the equality
\begin{align*}
 2\alpha_Q Q\otimes Q 
  &= \alpha_Q Q\otimes Q,
\end{align*}
i.e., $\alpha_Q = 0$. 
For any $g\in G$ such that $g\neq Q$, reading off the $g\otimes g$ terms from the left-hand side of \eqref{eq 0091} and from \eqref{eq 0092} instead yields the equality
\begin{align*}
 0 &= \alpha_g g\otimes g,
\end{align*}
so $\alpha_g=0$. So all the $\alpha$ coefficients are zero, i.e., $x$ is an element of $L\subseteq GP_*(WL)$. Consequently the natural embedding $L\rightarrow GP_*WL$ is an isomorphism, i.e., $GP_*$ is a left inverse functor to $W$, with domains and codomains as in \eqref{eq 09211} and \eqref{eq 09212}. 

We also need to show that $GP_*$ is a right inverse functor for $W$, but this is straightforward: if $A$ is a $G$-rigid $k$-bialgebra, then the image of the natural map $W(GP_*A) \rightarrow A$ is the subalgebra of $A$ generated by the generalized primitives. So for $A$ to be generalized-primitively-generated is precisely for the natural map $W(GP_*A) \rightarrow A$ to be surjective. Since $GP_*(W(GP_*A)) \rightarrow GP_*A$ is an isomorphism, the map $W(GP_*A) \rightarrow A$ is injective after applying $GP_*$. So $W(GP_*A) \rightarrow A$ is injective, by Proposition \ref{GP and injectivity}. So $GP_*$ is both right inverse and left inverse to $W$, with domains and codomains as in \eqref{eq 09211} and \eqref{eq 09212}.
\end{proof}

If $G$ is a group, then every $G$-suspensive vector space is torsion-free, and every $G$-rigid $k$-bialgebra is torsion-free. Consequently, when $G$ is a group, Theorem \ref{mm thm char 0} reduces to:
\begin{corollary}\label{tf corollary}
Let $k$ be a field of characteristic zero, and let $G$ be an abelian group. 
Then the category of $G$-suspensive Lie $k$-algebras is equivalent to the category of generalized-primitively-generated $G$-rigid $k$-bialgebras. The equivalence is realized by the functors $GP_*$ and $W$.
\end{corollary}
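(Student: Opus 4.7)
My plan is to derive Corollary \ref{tf corollary} directly from Theorem \ref{mm thm char 0} by checking that, when $G$ is a group, the two torsion-freeness hypotheses appearing in that theorem are automatically satisfied, so that the restricted categories in \eqref{eq 09211} and \eqref{eq 09212} coincide with the full categories $\Susp_G\Lie(k)$ and $\GPGen\Rig_G\Bialg(k)$.

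First I would verify that every $G$-suspensive $k$-vector space $V$ is torsion-free when $G$ is a group. Suppose $Q \in G$ and $v \in V_Q$ with $Qv = 0$. Since $G$ is a group, $Q$ has an inverse $Q^{-1}$, and the action is unital and associative, so
\[ v = 1 \cdot v = (Q^{-1}Q) \cdot v = Q^{-1} \cdot (Qv) = Q^{-1} \cdot 0 = 0. \]
Hence the only torsion element is zero, so $V$ is torsion-free per Definition \ref{def of torsion-free action}. In particular, every $G$-suspensive Lie $k$-algebra is torsion-free, giving $\TF\Susp_G\Lie(k) = \Susp_G\Lie(k)$.

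Next I would observe that, by Definition \ref{def of tf bialgebra}, a $G$-rigid $k$-bialgebra $A$ is torsion-free precisely when the $G$-suspensive $k$-vector space $GP_*A$ is torsion-free. Since we just established that every $G$-suspensive vector space is torsion-free when $G$ is a group, every $G$-rigid $k$-bialgebra is automatically torsion-free, giving $\TF\GPGen\Rig_G\Bialg(k) = \GPGen\Rig_G\Bialg(k)$.

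Finally, applying Theorem \ref{mm thm char 0} with these two identifications, the functors $W$ and $GP_*$ restrict (trivially) to mutually inverse equivalences
\begin{align*}
 W &: \Susp_G\Lie(k) \rightarrow \GPGen\Rig_G\Bialg(k), \\
 GP_* &: \GPGen\Rig_G\Bialg(k) \rightarrow \Susp_G\Lie(k),
\end{align*}
which is the claim. There is no real obstacle here: all the work has already been done in Theorem \ref{mm thm char 0}, and the only thing to verify is the elementary observation that invertibility of every element of $G$ trivializes the torsion-freeness conditions on both sides of the equivalence.
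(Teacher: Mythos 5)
Your proof is correct and is essentially the paper's own argument: the paper derives Corollary \ref{tf corollary} from Theorem \ref{mm thm char 0} by the same observation that invertibility of every $Q\in G$ forces $v = Q^{-1}\cdot(Qv) = 0$ for any torsion element, so both torsion-freeness hypotheses are vacuous. You simply spell out this one-line computation more explicitly than the paper does.
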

Corollary \ref{tf corollary} is not very remarkable. If $G$ is an abelian group and if a $k$-bialgebra $A$ is co-commutative, pointed, and $G$-rigid, then $A$ is a Hopf $k$-algebra; see the remarks immediately following Definition \ref{def of grouplike}. Generalized-primitively-generated rigid bialgebras are automatically pointed, by Theorem \ref{skew prim gen implies pointed}. Kostant's theorem splits every co-commutative Hopf $k$-algebra as the smash product of the group algebra of its grouplikes with the irreducible component of $1$; see Theorem 8.1.5 of \cite{MR0252485} for a textbook treatment. Since $A$ is assumed to be $G$-rigid, the grouplikes in $A$ are in the center of $A$, so this smash product in fact is simply a tensor product over $k$. If $k$ furthermore has characteristic zero, then the irreducible component of $1$ in $A$ coincides with the sub-bialgebra of $A$ generated by the primitives. 

The conclusion is this: Corollary \ref{tf corollary} amounts to only a rephrasing of the classical isomorphism of $k$-bialgebras $A\cong UPA \otimes_k kG$ that we have when $G$ is an abelian group. Hence the only new cases of Theorem \ref{mm thm char 0} are those in which the monoid $G$ has noninvertible elements. 

However, for topological applications, Theorem \ref{mm thm char 0} does not go quite as far as one would like: we want to be able to handle suspensive Lie algebras and rigid bialgebras which are {\em not} torsion-free. For example, the associated graded bialgebra $E_0R$ of the $K$-adic filtration on the Dyer-Lashof algebra is one of our motivating examples of a non-Hopf bialgebra. The bialgebra $E_0R$ is $\mathbb{N}$-rigid, but it is very far from being torsion-free. 

Omitting the adjective ``torsion-free'' from Theorem \ref{mm thm char 0} results in a false claim. Here is a counterexample to the statement of Theorem \ref{mm thm char 0} with the torsion-freeness hypothesis omitted:
\begin{example}\label{non-tf example}
Let $G$ be the free monoid generated by a single element $Q$. That is, $G$ is isomorphic to $\mathbb{N}$, but we write $G$ multiplicatively, as the monoid of nonnegative powers $\{ 1, Q, Q^2, Q^3, \dots\}$ of $Q$. Let $k$ be a field of characteristic zero, and let $L$ be the abelian $G$-suspensive Lie $k$-algebra which is one-dimensional as a $k$-vector space, concentrated in degree $Q$. The $G$-action on $L$ is necessarily trivial. In particular, the $G$-suspensive Lie $k$-algebra $L$ is not torsion-free.

Now consider the $G$-rigid $k$-bialgebra $WL$. It is isomorphic to\footnote{The notation $\odot$ was defined in Definition-Proposition \ref{def of W}.}
\begin{align*} 
 (kG\odot_kUL)/\left( g\odot \ell - 1\odot (g\cdot \ell)\right) 
  &\cong k[Q,x]/(Qx).\end{align*}
Of course $x$ is $Q$-primitive in $WL$. But consider $x^2$:
\begin{align*}
 \Delta(x^2) 
  &= x^2 \otimes Q^2 + 2Qx  \otimes Qx + Q^2\otimes x^2 \\ 
  &= x^2 \otimes Q^2 + Q^2\otimes x^2 ,
\end{align*}
so $x^2$ is a $Q^2$-primitive in $WL$. A similar argument shows that all the positive powers of $x$ in $WL$ are generalized primitives. So $GP_*WL$ is an infinite-dimensional $G$-suspensive Lie $k$-algebra, and $L\rightarrow GP_*WL$ is not surjective. So $GP_*$ and $W$ can fail to be mutually inverse, if we allow non-torsion-free $G$-suspensive Lie algebras.  
\end{example}
In the next section we consider what additional structure on suspensive Lie algebras one would have keep track of, in order to have a completely general analogue of Theorem \ref{mm thm char 0} which would apply to {\em all} generalized-primitively-generated rigid bialgebras.

\section{The prospects for a completely general Milnor-Moore theorem for bialgebras.}
\label{Suspensive mixed algebras.}

In Example \ref{non-tf example}, we saw a $G$-suspensive Lie algebra whose canonical map $L\rightarrow GP_*(WL)$ failed to be surjective. This failure of surjectivity came about because $L$ contained a torsion element, i.e., an element $x\in L_Q$ such that $Q\cdot x = 0$. 

Perhaps it is becoming clear that, in a rigid bialgebra $A$ which isn't torsion-free, the collection of generalized primitives is richly structured:
\begin{itemize}
\item If $x\in GP_Q(A)$ and $Qx=0$, then $x^2$ is in $GP_{Q^2}(A)$. So there is a squaring operation on the torsion elements of $GP_*(A)$.
\item More generally, if $x_1\in GP_{Q_1}(A)$ and $x_2\in GP_{Q_2}(A)$ and $Q_1x_2=0$, then $x_1x_2\in GP_{Q_1Q_2}(A)$.
\item More generally, if $x_1\in GP_{Q_1}(A)$ and $x_2\in GP_{Q_2}(A)$ and $Q_1x_2\otimes Q_2x_1 + Q_2x_1\otimes Q_1x_2 =0$ in $A\otimes_k A$, then $x_1x_2\in GP_{Q_1Q_2}(A)$. So there is a product operation on $GP_*(A)$ defined on those pairs $( x_1,x_2)$ on which the operation
\begin{align*}
 s_2(x_1 \otimes x_2) &= \left| x_1\right|x_2 \otimes \left|x_2\right| x_1 + \left| x_2\right|x_1 \otimes \left|x_1\right| x_2\end{align*}
vanishes, where $\left| x\right|$ denotes the degree of $x$ in $GP_*(A)$.
\item More generally, we have a $n$-ary product operation defined on $n$-tuples $(x_1,\dots ,x_n)$ of elements in $GP_*A$ which are in the kernel of the $k$-linear function $s_n: L^{\otimes_k n} \rightarrow L^{\otimes_k 2}$ defined as follows:
\begin{itemize}
\item Given a subset $U\subseteq \{ 1, \dots ,n\}$, an element $i\in \{ 1, \dots ,n\}$, and an element $\ell\in L$, write $x(U, \ell,i)$ for the element of $WL$ given by letting $x(U,\ell,i)$ be $\left|\ell\right|$ if $i\in U$, and letting $x(U,\ell,i)$ be $\ell$ if $i\notin U$.
\item 
Let $s_n(\ell_1 \otimes \dots \otimes \ell_n)$ be the element
\[ \sum_{U\subseteq \{ 1, \dots ,n\}: 0<\left| U\right| <n} \left( \prod_{i=1}^n x(U,\ell,i)\right)\otimes \left( \prod_{i=1}^n x(U^{\prime},\ell,i)\right)\]
of $WL\otimes_k WL$, where $U^{\prime}$ is the complement of $U\subseteq \{ 1,\dots ,n\}$.
\end{itemize}
If $(x_1,\dots ,x_n)\in \ker s_n$, then $x_1\cdot \dots \cdot x_n$ is a generalized primitive in $A$.
\end{itemize}
The reason that these strange functions $s_n$ arise is that, if $x_1,\dots  ,x_n\in A$ have the property that $x_i$ is a $\left| x_i\right|$-primitive for each $i$, then $s_n(x_1\otimes \dots\otimes x_n)$ is equal to the difference
\[ \Delta(x_1 \cdots x_n) - \left| x_1 \cdots x_n\right| \otimes (x_1\cdots  x_n) -  (x_1\cdots  x_n) \otimes \left| x_1 \cdots x_n\right|.\]
Hence the vanishing of $s_n(x_1\otimes \dots\otimes x_n)$ is equivalent to the product of the generalized primitives $x_1, \dots ,x_n$ also being a generalized primitive.

To give a full theory of structured Lie algebras which is equivalent (over a field of characteristic zero) to generalized-primitively-generated rigid bialgebras, those Lie algebras would need to be equipped with a large amount of structure: for each element $x\in \ker s_n\subseteq WL^{\otimes_k n}$, we would need to record a product element $m_n(x)\in L$. We could then impose the relation $m_n(x)-x$ on $WL$ to get a quotient bialgebra of $WL$ in which the ``spurious'' (i.e., not in the image of $L\hookrightarrow WL$) generalized primitives in $WL$ are identified with elements in Lie filtration $1$, i.e., elements in the image of $L\hookrightarrow WL$. 

But this is a tall order. It means keeping track of an $n$-ary operation $m_n$ on $L$ for each $n\geq 2$. It would not be enough to confine our attention to the case $n=2$, i.e., the generalized primitives in $WL$ which are linear combinations of products of pairs of elements in $L$. Recording only the data of such linear combinations of products of pairs would only enable us to impose the correct relations on $WL$ to quotient out the generalized primitives in Lie filtration $2$. To capture the ``spurious'' 
generalized primitives in $WL$ which are in Lie filtration $n$, we need the data encoded by the $n$-ary product operation $m_n:\ker s_n\rightarrow L$. 

Recording the data of these $n$-ary product operations $m_2,m_3,\dots$, and axiomatizing the various properties and compatibilities that they satisfy, involves some ugly bookkeeping. While the authors worked out some of the resulting theory and surmise that it can be made to work, we do not feel that it winds up being valuable, because the resulting structured Lie algebras are such a headache to use that one is better off just working with the bialgebras.

This is made clear already in one of our motivating examples, the associated graded bialgebra $E_0R$ of the $K$-adic filtration on the Dyer-Lashof algebra $R$, where $K$ is the ideal of $R$ generated by all homogeneous elements in positive degree. For simplicity, consider the case $p=2$. Then $Q^0Q^n = 0 = Q^nQ^0$ for all $n>0$, and consequently $\Delta(x) = (Q^0)^n\otimes x + x\otimes (Q^0)^n$ for all $x$ in internal degree $n>0$ in $E_0R$. In other words, {\em every homogeneous element of positive internal degree in $E_0R$ is a generalized primitive.} Furthermore, the vanishing of $Q^0x$ for all $x$ in positive degree means that the operations $s_2,s_3,\dots$ vanish completely on elements of $E_0R$ in positive suspensive degrees. Consequently, to use the theory suggested in the previous paragraph, the products $m_2,m_3, \dots$ would need to record the full data of {\em all} of the products of homogeneous elements of positive internal degree in $E_0R$. This is a ridiculous situation: rather than the Lie algebra of generalized primitives offering a simpler and more familiar algebraic structure than the bialgebra, we find ourselves needing to supplement the Lie bracket on $E_0R$ with essentially the full structure of all multiplications on $E_0R$, encoded in an unfamiliar way! We might as well have stuck with $E_0R$ itself.

Our conclusion is that we do not have much confidence in the possibility of a {\em useful} Milnor-Moore theorem for {\em arbitrary} rigid bialgebras over a field of characteristic zero. However, for {\em some} families of rigid bialgebras (e.g. the torsion-free rigid bialgebras), we saw already in Theorem \ref{mm thm char 0} that a useful and satisfying Milnor-Moore theorem can indeed be obtained. So our next task is to formulate and prove a useful Milnor-Moore theorem for a family of bialgebras including $E_0R$ and others which structurally resemble $E_0R$.

\section{Milnor-Moore for left-sided bialgebras in characteristic zero.}

The associated graded bialgebra $E_0R$ of the $(Q^1,Q^2,\dots)$-adic filtration on the Dyer-Lashof algebra $R$ is a generalized-primitively-generated $\mathbb{N}$-rigid bialgebra, but since it is not torsion-free, Theorem \ref{mm thm char 0} does not apply to it. Indeed, {\em every} generalized primitive in $E_0R$ is torsion\footnote{See Definition \ref{def of torsion-free action} for the definition of torsion elements in suspensive Lie algebras.}.

Despite its extreme failure to be torsion-free, $E_0R$ has a special property which makes it much better suited to a Lie-algebra-theoretic analysis than many other non-torsion-free rigid $\mathbb{N}$-bialgebras. The essential property is the following:
\begin{prop}\label{dyer-lashof algebra is left-sided}
Let $x,y\in E_0R$ be homogeneous elements of internal degrees $\left|x\right|$ and $\left| y\right|$, respectively. Suppose $\left|x\right|$ and $\left| y\right|$ are each positive.
\begin{itemize}
\item If $p>2$ and $\left|x\right| \leq \left| y\right|$, then $xy=0$.
\item If $p=2$ and $\left|x\right| < \left| y\right|$, then $xy=0$.
\end{itemize}
\end{prop}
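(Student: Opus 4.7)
The plan is to reduce to the admissible monomial basis of $R$ recalled in the appendix, and then to invoke the excess relations in $R$ directly.

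First, by $\mathbb{F}_p$-linearity of the product in $E_0R$, it suffices to prove the claim when $x$ and $y$ lift to admissible monomials in $R$. Since $x$ and $y$ have positive internal degree, any $Q^0$-factors in the lifts can be eliminated using the Adem rewriting $Q^n Q^0 = \sum c_{r,s}\,Q^r Q^s$ with $r,s > 0$, together with the excess relation $Q^0 Q^i = 0$ for $i > 0$. So we may take the lifts to be admissible monomials $x = Q^{s_1}\cdots Q^{s_a}$ and $y = Q^{t_1}\cdots Q^{t_b}$ with strictly positive exponents; at odd primes, Bocksteins are carried along with only notational changes.

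Next, consider the concatenation $w = Q^{s_1}\cdots Q^{s_a}\,Q^{t_1}\cdots Q^{t_b}$ in $R$. At $p = 2$ its formal excess is
\[
    e(w) = 2 s_1 - (|x| + |y|).
\]
Admissibility of $x$, together with positivity of its exponents, forces $s_1 \leq |x|$, with equality only when $a = 1$. Under the hypothesis $|x| < |y|$ we then obtain $e(w) \leq |x| - |y| < 0$. At $p > 2$ the same argument, using the appropriate excess formula involving the internal-degree factor $2(p-1)$ together with the contributions of any Bocksteins, yields the analogous negative bound under the non-strict hypothesis $|x| \leq |y|$.

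Third, invoke the excess relation: an admissible monomial whose excess lies below the threshold vanishes in $R$. Although $w$ itself need not be admissible, the Adem algorithm rewrites $w$ as a sum of admissible monomials, each of which we will argue inherits the same excess bound. Each admissible summand then vanishes in $R$ by the excess relation, forcing $w = 0$ in $R$, and in particular $xy = 0$ in $E_0R$.

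The main obstacle is the bookkeeping in the third step. Any Adem relation $Q^r Q^s = \sum_k c_k\, Q^{r+s-k} Q^k$ applied at an interior position of $w$ leaves the leading exponent $s_1$ unchanged, and hence preserves the excess bound. Cascading reductions that eventually propagate to the leading position may alter $s_1$, and one must verify by direct analysis of the Adem coefficients that the new leading exponent never exceeds the original $s_1$. Once this combinatorial bound is in hand, each admissible summand of $w$ has excess at most $e(w) < 0$, and the excess relation completes the proof.
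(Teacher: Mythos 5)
Your core computation is the right one and is exactly what the paper's one-line proof is pointing at: the excess of the concatenated word $w$ is $2s_1 - (|x|+|y|) \leq |x|-|y|$ at $p=2$ (with the extra slack $2i_1 - \epsilon_1 < 2i_1(p-1)-\epsilon_1$ supplying the non-strict case at odd primes), and negative excess kills the element.

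The problem is your third step, which is both unnecessary and the one place where you leave a claim unproved. You do not need to rewrite $w$ into admissible form at all: in the paper's presentation, $R = R(-\infty)/J_0$ where $J_0$ is the two-sided ideal generated by \emph{all} monomials of negative excess, not only the admissible ones. The concatenation $w$ of any word representing a lift of $x$ with any word representing a lift of $y$ is itself a monomial in the generators, its excess is computed by the same formula, and you have already shown that excess is negative. Hence $w \in J_0$ and $w = 0$ in $R$, so $xy = 0$ in $E_0R$, full stop. (For the same reason your first step, scrubbing out $Q^0$-factors via Adem relations, is also superfluous: exponents being nonnegative already gives $s_1 \leq |x|$, admissibility is not needed.) As written, your argument hinges on the deferred combinatorial claim that the Adem algorithm never increases the leading exponent; that claim happens to be true --- the nonzero terms of $Q^rQ^s = \sum_i \binom{i-s-1}{2i-r}Q^{r+s-i}Q^i$ satisfy $r+s-i < r$, and interior applications leave the leading exponent alone --- but since you neither prove it nor need it, you should delete the detour rather than patch it.
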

\begin{proof}
This is simply a consequence of the fact that all monomials of negative excess are zero in $R$; see \cref{Review of the Dyer-Lashof algebra.}.
\end{proof}

\begin{definition}\label{def of left-sided}
Let $G$ be a commutative monoid.
\begin{itemize}
\item Given $Q,Q^{\prime}\in G$, we say that {\em $Q$ divides $Q^{\prime}$} if there exists some $g\in G$ such that $gQ = Q^{\prime}$.
\item Suppose that $k$ is a field and that $A$ is a $G$-rigid $k$-bialgebra. We say that $A$ is {\em left-sided} if, for all $Q,Q^{\prime}\in G$ such that $Q$ divides $Q^{\prime}$, for all $x\in GP_Q(A)$ and all $y\in GP_{Q^{\prime}}(A)$ we have $xy=0$.
\item A {\em morphism of left-sided $G$-rigid $k$-bialgebras} is simply a morphism of $G$-rigid $k$-bialgebras whose domain and codomain are each left-sided. We write $\LeftSided\Rig_G\Bialg(k)$ for the resulting category of left-sided $G$-rigid $k$-bialgebras.
\end{itemize}
\end{definition}

The advantage of working with left-sided bialgebras is that, even when they fail to be torsion-free (in the sense of Definition \ref{def of tf bialgebra}), there is no need to bring in the arcane product structure $m_2,m_3, \dots$ from \cref{Suspensive mixed algebras.}. Use of the products $m_2, m_3, \dots$ is unnecessary since in a left-sided bialgebra, the product of two (or finitely many) generalized primitives is describable entirely in terms of the Lie bracket. That is, 
\begin{align*}
 xy &= \left\{ \begin{array}{ll} [x,y] &\mbox{\ if\ } \left| x\right| >\left| y\right| \\
 -\left[y,x\right] &\mbox{\ otherwise.}\end{array}\right.
\end{align*}

As a consequence of Proposition \ref{dyer-lashof algebra is left-sided}, $E_0R$ is left-sided for primes $p>2$. If $p=2$, then we have some nonzero products of elements in $E_0R$ of the same internal degree (e.g. $Q^1Q^1$). So while $E_0R$ is not left-sided for $p=2$, all such nonzero products arise from the squaring operation in $E_0R$. Consequently the full product structure of $E_0R$ is recoverable even when $p=2$ by a combination of the Lie bracket and the restriction map on the generalized primitives. The presence of the restriction is a feature of generalized primitives in bialgebras over fields of positive characteristic. Ultimately we would like to develop the theory of the present paper (including left-sided bialgebras) in positive characteristic, for the sake of understanding Lie-algebraic aspects of the Dyer-Lashof algebra. However, in this paper we confine ourselves to the characteristic zero case, where there is already much to be said.

The definition of left-sidedness for a rigid bialgebra $A$ is a condition on the vanishing of certain products of generalized primitives. This condition also has consequences for other products. For example, Proposition \ref{left-sided products of grouplikes and gps} establishes that certain products of grouplikes and generalized primitives vanish as well:
\begin{prop}\label{left-sided products of grouplikes and gps}
Let $G$ be a commutative monoid, and let $k$ be a field of characteristic not equal to $2$. Then the following are each true:
\begin{enumerate}
\item In a left-sided $G$-rigid $k$-bialgebra, every generalized primitive is torsion.
\item If $V$ is a torsion $G$-suspensive $k$-vector space, then $V_Q = 0$ for each element $Q\in G$ which has an inverse.
\end{enumerate}
\end{prop}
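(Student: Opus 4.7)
The plan is to reduce both parts to elementary manipulations, with Part 1 being the substantive one. Let $A$ be a left-sided $G$-rigid $k$-bialgebra and let $x\in GP_Q(A)$ be a homogeneous generalized primitive. The key observation is that left-sidedness, applied with $Q'=Q$ and $y=x$ (using that $Q$ divides itself), forces $x^{2}=0$. So the plan for Part 1 is to extract information about $Qx$ by computing $\Delta(x^{2})$ in two different ways and comparing.

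Concretely, since $x$ is $Q$-primitive and since $Q$ is central in $A$ (because $A$ is $G$-rigid, so the image of the rigid unit map lies in the center), expanding $\Delta(x)^{2}$ gives
\[ \Delta(x^{2}) \;=\; x^{2}\otimes Q^{2} \;+\; 2\,(Qx)\otimes (Qx) \;+\; Q^{2}\otimes x^{2}, \]
where the two middle cross-terms collapse via $xQ=Qx$. Since $x^{2}=0$, both sides vanish and we are left with $2\,(Qx)\otimes(Qx)=0$ in $A\otimes_{k}A$. Because $\mathrm{char}\,k\neq 2$, we obtain $(Qx)\otimes(Qx)=0$; then the elementary fact that $a\otimes a=0$ in a vector space over a field forces $a=0$ (test against a linear functional separating $a$ from zero), so $Qx=0$. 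This shows every homogeneous generalized primitive is torsion, which is what ``$GP_{*}(A)$ is torsion'' means.

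Part 2 is then immediate: if $Q\in G$ has an inverse $Q^{-1}$ and $v\in V_{Q}$, then torsionness gives $Q\cdot v=0$, and associativity and unitality of the $G$-action give $v=1\cdot v=(Q^{-1}Q)\cdot v=Q^{-1}\cdot(Q\cdot v)=0$. The only subtle point in the whole argument is the use of characteristic $\neq 2$ in Part 1 — indeed, in characteristic $2$ the cross-term $2(Qx)\otimes(Qx)$ vanishes automatically and the argument gives no information, which matches the paper's later remarks that the positive-characteristic case would genuinely require extra structure (a restriction). Centrality of grouplikes (i.e., $G$-rigidity, not merely $G$-gradedness) is also essential in order to fold $xQ\otimes Qx$ and $Qx\otimes xQ$ together.
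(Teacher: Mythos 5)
Your proof is correct and follows essentially the same route as the paper's: apply left-sidedness with $Q'=Q$ to get $x^{2}=0$, expand $\Delta(x)^{2}$ using centrality of $Q$ to isolate the cross-term $2(Qx)\otimes(Qx)$, and invoke characteristic $\neq 2$ to conclude $Qx=0$; Part 2 is the same one-line inverse argument. Your added remarks (justifying $a\otimes a=0\Rightarrow a=0$ via a separating functional, and flagging exactly where centrality and the characteristic hypothesis enter) are accurate refinements of steps the paper leaves implicit.
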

\begin{proof}\leavevmode
\begin{enumerate}
\item
Let $Q\in G$, and let $x\in A$ be a $Q$-primitive.
We have $x^2 =0$ by the definition of left-sidedness. Now apply the coproduct:
\begin{align*}
 0 
  &= \Delta(x^2) \\
  &= \left( Q\otimes x + x\otimes Q\right)^2 \\
  &= Q^2\otimes x^2 + x^2\otimes Q^2 + 2Qx \otimes Qx\\
  &= 2Qx \otimes Qx.
\end{align*}
Since $k$ is a field of characteristic not $2$, the only way for an element $a\in A$ to have the property that $2a\otimes a\in A\otimes A$ is zero is for $a$ itself to be zero. So we must have $Qx=0$.
\item If $Q\in G$ has an inverse $Q^{-1}$, then for each $x\in V_Q$, we have $x = Q^{-1}(Qx) = Q^{-1}\cdot 0 = 0$.
\end{enumerate}
\end{proof}

\begin{definition}\label{def of ZL}
Let $k$ be a field, let $G$ be a commutative monoid, and let $L$ be a $G$-suspensive Lie $k$-bialgebra. By the {\em universal left-sided $G$-rigid enveloping algebra of $L$,} written $ZL$, we mean the $k$-algebra which is the quotient of $WL$ by the two-sided ideal $I$ generated by:
\begin{itemize}
\item all products of the form $\ell_1\ell_2$, where $\ell_1\in GP_{Q_1}(WL)$ and $\ell_2\in GP_{Q_2}(WL)$ and $Q_1$ divides $Q_2$ in $G$,
\item and all products of the form $Q\ell$, where $Q\in G$ and $\ell\in GP_Q(WL)$.
\end{itemize}
\end{definition}
While the vanishing of the products $\ell_1\ell_2$ in Definition \ref{def of ZL} is clearly necessary in order for $ZL$ to be a left-sided bialgebra, the vanishing of the products $Q\ell$ is also necessary, due to Proposition \ref{left-sided products of grouplikes and gps}. (Specifically, if we fail to include the products $Q\ell$ in the ideal $I$, then the resulting ideal would fail to be a bi-ideal, so the resulting quotient $ZL/I$ would fail to be a bialgebra.)

\begin{prop}\label{ZL is a bialgebra}
Let $k$ be a field, let $G$ be a commutative monoid, and let $L$ be a $G$-suspensive Lie $k$-bialgebra. The two-sided ideal $I$ of $WL$, defined in Definition \ref{def of ZL}, is a bi-ideal in $WL$. Consequently $ZL$ is a $k$-bialgebra, and $WL\rightarrow ZL$ is a surjective $k$-bialgebra morphism.
\end{prop}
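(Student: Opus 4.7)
The plan is to verify the two standard conditions for $I$ to be a bi-ideal: that $\epsilon(I)=0$ and that $\Delta(I) \subseteq I\otimes_k WL + WL\otimes_k I$. Since $I$ is a two-sided ideal in $WL$, $\epsilon$ is an algebra morphism, and $I\otimes_k WL + WL\otimes_k I$ is a two-sided sub-bimodule of $WL\otimes_k WL$, it suffices in each case to verify the condition on the given generators of $I$, namely the elements $\ell_1\ell_2$ (with $\ell_i\in GP_{Q_i}(WL)$ and $Q_1$ dividing $Q_2$), and the elements $Q\ell$ (with $\ell\in GP_Q(WL)$).

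The counit condition is immediate: every generalized primitive has augmentation zero (this is a standard observation already recalled in the paper), so $\epsilon$ kills $Q\ell$ and $\ell_1\ell_2$ directly. For the coproduct condition on a generator of the form $Q\ell$, I would use that grouplikes are central in $WL$ (since $WL$ is $G$-rigid, built into Definition-Proposition \ref{def of W}) to compute
\[ \Delta(Q\ell) = (Q\otimes Q)(Q\otimes \ell + \ell \otimes Q) = Q^2\otimes Q\ell + Q\ell\otimes Q^2, \]
which lies in $WL\otimes_k I + I\otimes_k WL$ since $Q\ell \in I$.

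For a generator of the form $\ell_1\ell_2$ with $Q_2 = gQ_1$, expansion gives
\[ \Delta(\ell_1\ell_2) = \ell_1\ell_2\otimes Q_1Q_2 + \ell_1Q_2\otimes Q_1\ell_2 + Q_1\ell_2\otimes \ell_1Q_2 + Q_1Q_2\otimes \ell_1\ell_2. \]
The outer terms lie in $I\otimes_k WL + WL\otimes_k I$ directly because $\ell_1\ell_2\in I$ by construction. The key step, and essentially the only nontrivial one, is to handle the cross-terms: using centrality of the grouplikes and the divisibility relation $Q_2 = gQ_1$, I would rewrite
\[ \ell_1 Q_2 = Q_2 \ell_1 = g\cdot (Q_1\ell_1), \]
and observe that $Q_1\ell_1$ is a generator of $I$ of the second kind, so $\ell_1 Q_2 \in I$. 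Hence the cross-term $\ell_1Q_2\otimes Q_1\ell_2$ lies in $I\otimes_k WL$, and symmetrically $Q_1\ell_2\otimes \ell_1 Q_2 \in WL\otimes_k I$.

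Putting these verifications together shows $\Delta(I)\subseteq I\otimes_k WL + WL\otimes_k I$, and hence that $I$ is a bi-ideal; $ZL = WL/I$ inherits a bialgebra structure for which the projection $WL\twoheadrightarrow ZL$ is a surjective bialgebra morphism. The only substantive idea in the argument is the centrality-plus-divisibility manipulation $\ell_1 Q_2 = g(Q_1\ell_1)\in I$ which converts the dangerous cross-terms into elements known to lie in $I$; everything else is routine.
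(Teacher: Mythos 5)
Your proposal is correct and follows essentially the same route as the paper's own proof: reduce to the generators of $I$, compute $\Delta(Q\ell)$ and $\Delta(\ell_1\ell_2)$ using centrality of the grouplikes, and dispose of the cross-terms via the identity $\ell_1 Q_2 = g\cdot(Q_1\ell_1)\in I$ coming from the divisibility $Q_2=gQ_1$ and the second family of generators. The only difference is cosmetic: you make explicit the reduction-to-generators step and the counit argument via vanishing of augmentation on generalized primitives, which the paper leaves implicit.
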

\begin{proof}
For $I$ to be a bi-ideal, it must satisfy
$\Delta(I) \subseteq I \otimes WL + WL \otimes I$ and $\epsilon(I)=0$.  
Suppose $\ell_1\in GP_{Q_1}(WL)$ and $\ell_2\in GP_{Q_2}(WL)$ and $Q_1$ divides $Q_2$ in $G$. Then there exists a $g\in G$ such that $Q_2=gQ_1$. Consequently we have
\begin{align*}\Delta(\ell_1 \ell_2) &= \Delta(\ell_1)\Delta(\ell_2) \\
&=\ell_1\ell_2\otimes Q_1gQ_1 + \ell_1 gQ_1 \otimes Q_1\ell_2 + Q_1\ell_2\otimes \ell_1gQ_1 + Q_1gQ_1 \otimes \ell_1\ell_2\\
&\subseteq I\otimes WL + WL\otimes I ,\end{align*}
since $\ell_1\ell_2$ and $Q_1\ell_1$ are each in $I$.
Similarly, if $\ell\in L_Q$, then:
\begin{align*}
 \Delta(Q\ell) 
  &= Q\ell\otimes Q^2 + Q^2\otimes Q\ell  \\
  &\subseteq I\otimes WL + WL\otimes I,
\end{align*}
since $Q\ell\in I$. So $\Delta(I)\subseteq WL\otimes I+I\otimes WL$.

As for the augmentation, since $\epsilon(\ell)=0$ for all $\ell \in L_Q$, and since $\epsilon$ is an algebra homomorphism, we have that $\epsilon$ vanishes on the generators of the ideal $I$, hence on all elements of $I$, as desired.
\end{proof}

Since $ZL$ is a quotient of $WL$ by elements contained in positive Lie filtration, the map $WL\rightarrow ZL$ is an isomorphism in Lie filtration $0$, i.e., an isomorphism on grouplikes. So, since $WL$ is $G$-rigid, the $k$-bialgebra $ZL$ is also $G$-rigid. In fact, under a certain reasonable hypothesis on the monoid $G$, the bialgebra $ZL$ enjoys much stronger properties, proven below in Proposition \ref{ZL is left-sided and GPG}.

The associated graded bialgebra $E_0R$ of the $K$-adic filtration on the Dyer-Lashof algebra $R$ is $\mathbb{N}$-rigid. The commutative monoid $\mathbb{N}$ has the property that its divisibility ordering is total. That is, given two elements $Q,Q^{\prime}$ of $\mathbb{N}$, either $Q$ divides $Q^{\prime}$ or $Q^{\prime}$ divides $Q$. (This statement looks strange because of course it is not true that, given two nonnegative integers, one is necessarily a divisor of the other. This strangeness is an artifact of mixing additive and multiplicative notation. For the sake of the terminology of Definition \ref{def of left-sided}, it is better to regard $\mathbb{N}$ as the free monoid on one generator, and to use multiplicative notation for it.) Hence $\mathbb{N}$ is {\em linear} in the following sense:
\begin{definition}\label{def of linear}
We say that a commutative monoid $G$ is {\em linear} if, for each pair $Q,Q^{\prime}\in G$, either $Q$ divides $Q^{\prime}$ or $Q^{\prime}$ divides $Q$.
\end{definition}

\begin{lemma}\label{wl to zl is surj on gp}
Let $k,G,L$ be as in Proposition \ref{ZL is a bialgebra}. Suppose that $G$ is linear. Then the map of Lie algebras $GP_*(WL)\rightarrow GP_*(ZL)$ is surjective.
\end{lemma}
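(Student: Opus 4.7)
The plan is to show that $ZL$ decomposes as a $k$-vector space direct sum $ZL = kG \oplus \bar{L}$, where $\bar{L}$ denotes the image of $L\hookrightarrow WL\twoheadrightarrow ZL$, and then to extract the generalized primitives of $ZL$ by matching components in the coproduct equation.

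First, I would prove the spanning statement $ZL = kG + \bar{L}$ as a $k$-vector space. This is where the linearity hypothesis enters: given $\ell_1\in L_{Q_1}$ and $\ell_2\in L_{Q_2}$, linearity gives (possibly after swapping) $Q_1\mid Q_2$, so $\ell_1\ell_2\in I$; in the other case $Q_2\mid Q_1$, so $\ell_2\ell_1\in I$ and $\ell_1\ell_2\equiv [\ell_1,\ell_2]\pmod{I}$ with $[\ell_1,\ell_2]\in L$. By induction on length, every monomial in $L$ of length $\geq 1$ reduces modulo $I$ to an element of $\bar{L}$. Since $WL$ is $kG$-linearly generated by such monomials and $kG\cdot\bar{L}\subseteq\bar{L}$, this gives $ZL = kG + \bar{L}$.

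Next, I would upgrade this to a direct sum by constructing a bialgebra retraction $\pi:ZL\to kG$ of the rigid unit inclusion. The map $\mathrm{id}_{kG}\otimes\epsilon_{UL}:kG\otimes_k UL\to kG$ sends each defining relation $g\odot\ell - 1\odot(g\cdot\ell)$ of $WL$ to $g\cdot 0 - 1\cdot 0 = 0$, hence descends to $\pi:WL\to kG$; direct checks on the generators $g$ and $\ell\in L_Q$ show $\pi$ is a bialgebra homomorphism. Since $\pi(Q\ell) = Q\cdot\epsilon_{UL}(\ell) = 0$ and $\pi(\ell_1\ell_2) = 0$, the map $\pi$ kills all generators of $I$, so it descends further to $\pi_Z:ZL\to kG$, retracting $kG\hookrightarrow ZL$. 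Thus $ZL = kG\oplus\ker\pi_Z$, and combining with $\bar{L}\subseteq\ker\pi_Z$ and $ZL=kG+\bar{L}$ yields $\ker\pi_Z = \bar{L}$. A short argument applying the coproduct to a hypothetical relation $\sum_g\bar{\ell}_g = 0$ (with $\bar{\ell}_g\in\bar{L}_g$) together with the linear independence of $\{\bar{g}\}_{g\in G}$ in $kG$ refines this to $\bar{L} = \bigoplus_{g\in G}\bar{L}_g$.

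With the decomposition $ZL\otimes ZL = (kG\otimes kG)\oplus(kG\otimes\bar{L})\oplus(\bar{L}\otimes kG)\oplus(\bar{L}\otimes\bar{L})$ in place, the main step is routine. Given $\bar{x}\in GP_Q(ZL)$, write $\bar{x} = \sum_g\alpha_g\bar{g} + \sum_g\bar{\ell}_g$ with $\bar{\ell}_g\in\bar{L}_g$, and match the four components of $\Delta(\bar{x}) = Q\otimes\bar{x} + \bar{x}\otimes Q$. The $kG\otimes kG$ component, combined with linear independence of $\{\bar{g}\otimes\bar{g'}\}$, forces every $\alpha_g = 0$; the $kG\otimes\bar{L}$ component forces $\bar{\ell}_g = 0$ for every $g\neq Q$. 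Hence $\bar{x} = \bar{\ell}_Q\in\bar{L}_Q$, which lifts to $\ell_Q\in L_Q\subseteq GP_Q(WL)$. The principal obstacle I anticipate is verifying that $\pi$ is well-defined and bialgebra-valued on $WL$ and descends to $ZL$; once the clean direct-sum decomposition is in place, the component-matching is of the same flavor as the calculation already carried out in the torsion-free case of Theorem \ref{mm thm char 0}.
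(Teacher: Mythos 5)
Your proof is correct and follows essentially the same route as the paper's: linearity of $G$ forces every product of two elements of $L$ to reduce, modulo the ideal $I$, to a Lie bracket, so the Lie filtration of $ZL$ collapses after the first stage and $ZL$ is spanned by $kG$ together with the image of $L$. Your additional work (the retraction $\pi_Z:ZL\to kG$, the direct-sum decomposition, and the component-matching in $\Delta(\bar{x})=Q\otimes\bar{x}+\bar{x}\otimes Q$) merely makes explicit the final step that the paper leaves implicit, namely that a generalized primitive lying in Lie filtration $1$ has vanishing grouplike part.
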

\begin{proof}
By the linearity of $G$, given any pair of elements $\ell_1,\ell_2$ in Lie degree $1$ in $WL$, either the degree of $\ell_1$ divides the degree of $\ell_2$, or vice versa. Without loss of generality, assume that the degree of $\ell_1$ divides that of $\ell_2$. Then, in the quotient $ZL$ of $WL$, we have $\ell_1\ell_2 = 0$ and consequently $\ell_2\ell_1 = [\ell_2,\ell_1]$. Hence, in $ZL$, a product of elements in Lie filtration $1$ is also in Lie filtration $1$, i.e., the Lie filtration of $ZL$ collapses after the first stage. Hence every element of $ZL$ is a linear combination of grouplikes and grouplikes multiplied by elements of $L$. In particular, the generalized primitives of $ZL$---those elements of Lie filtration $1$ which do not lie in Lie filtration $0$---are in the image of the composite map $L \rightarrow GP_*(WL)\rightarrow GP_*(ZL)$, hence in the image of the map $GP_*(WL)\rightarrow GP_*(ZL)$.
\end{proof}

\begin{prop}\label{ZL is left-sided and GPG}
Let $k,G,L$ be as in Proposition \ref{ZL is a bialgebra}. Suppose that $G$ is linear. Then the $G$-rigid $k$-bialgebra $ZL$ is left-sided and generalized-primitively-generated.
\end{prop}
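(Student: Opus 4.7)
The plan is to deduce both conclusions almost immediately from Lemma \ref{wl to zl is surj on gp} and its proof, leveraging the fact that the quotient $WL \twoheadrightarrow ZL$ is a surjection of $G$-rigid $k$-bialgebras.

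First I would handle generalized-primitive generation. The argument is already contained in the proof of Lemma \ref{wl to zl is surj on gp}: linearity of $G$ forces the Lie filtration on $ZL$ to collapse after filtration $1$, since for any two elements $\ell_1,\ell_2 \in L\subseteq WL$, either $|\ell_1|$ divides $|\ell_2|$ or $|\ell_2|$ divides $|\ell_1|$ in $G$, and then either $\ell_1\ell_2$ or $\ell_2\ell_1$ vanishes in $ZL$, so that the commutator relation in $WL$ expresses the remaining product as an element of Lie filtration $1$. Consequently every element of $ZL$ is a $k$-linear combination of elements of the form $Q\cdot \ell$ for $Q\in G$ and $\ell\in L\cup\{1\}$. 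Since $G$ acts on $ZL$ via the rigid unit map and each $\ell\in L$ maps to a generalized primitive in $ZL$, this exhibits $ZL$ as generated, as a $k$-algebra, by grouplikes and generalized primitives.

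Next I would verify left-sidedness. Given $Q,Q^{\prime}\in G$ with $Q$ dividing $Q^{\prime}$, and given $x\in GP_Q(ZL)$ and $y\in GP_{Q^{\prime}}(ZL)$, I want to show $xy=0$ in $ZL$. By Lemma \ref{wl to zl is surj on gp}, the map $GP_*(WL)\rightarrow GP_*(ZL)$ is surjective, and by construction it is a morphism of $G$-suspensive Lie $k$-algebras, hence preserves the $G$-grading. So I can choose lifts $\tilde{x}\in GP_Q(WL)$ and $\tilde{y}\in GP_{Q^{\prime}}(WL)$ of $x$ and $y$ respectively. Because $Q$ divides $Q^{\prime}$, the product $\tilde{x}\tilde{y}$ lies in the two-sided ideal $I$ of $WL$ defining $ZL$ (the first family of generators of $I$). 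Passing to the quotient $WL/I = ZL$, we conclude $xy = 0$, as required.

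The only subtlety worth flagging is ensuring that the lifts $\tilde{x},\tilde{y}$ can be chosen to lie in the correct homogeneous components $GP_Q(WL)$ and $GP_{Q^{\prime}}(WL)$; but this is automatic since the surjection of Lemma \ref{wl to zl is surj on gp} is grading-preserving and respects the $G$-decomposition $GP_*(A) = \bigoplus_{Q\in G} GP_Q(A)$. With these two observations in place, no further computation is required, so I do not anticipate a substantial obstacle: both statements are essentially formal consequences of the collapse of the Lie filtration on $ZL$ established in the proof of Lemma \ref{wl to zl is surj on gp}.
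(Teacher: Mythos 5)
Your proof is correct. The left-sidedness half is essentially identical to the paper's argument: lift $x$ and $y$ to homogeneous generalized primitives of $WL$ via Lemma \ref{wl to zl is surj on gp} (and your remark that homogeneous lifts exist is right, since a surjective grading-preserving map of $G$-graded vector spaces is surjective in each degree), then observe that $\tilde{x}\tilde{y}$ lies in the defining ideal $I$. Where you diverge is in the generalized-primitive generation: you re-run the filtration-collapse computation from the proof of Lemma \ref{wl to zl is surj on gp} to get the explicit description of $ZL$ as spanned by grouplikes times elements of $L$. The paper instead uses the purely formal observation that any quotient bialgebra of a generalized-primitively-generated bialgebra is again generalized-primitively-generated (a surjective bialgebra map carries grouplikes to grouplikes and $Q$-primitives to $Q$-primitives), applied to $WL\twoheadrightarrow ZL$; this needs neither linearity of $G$ nor the collapse of the Lie filtration. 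Your version buys a sharper structural statement about $ZL$ (everything is in Lie filtration $\leq 1$), while the paper's is shorter and isolates exactly which hypotheses each conclusion actually uses.
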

\begin{proof}
Let $Q,Q^{\prime}\in G$, suppose that $Q$ divides $Q^{\prime}$, and suppose that $x\in GP_Q(ZL)$ and $y\in GP_{Q^{\prime}}(ZL)$. Use Lemma \ref{wl to zl is surj on gp} to lift $x$ to an element $\tilde{x}\in GP_Q(WL)$, and lift $y$ to an element $\tilde{y}\in GP_{Q^{\prime}}(WL)$. Then $\tilde{x}\tilde{y}$ is in the kernel of $WL\rightarrow ZL$. So $xy=0$, i.e., $ZL$ is left-sided.

As for the claim of generalized-primitive generation: since $WL$ is generalized-primitively-generated, every quotient bialgebra of $WL$ is generalized-primitively-generated. In particular, $ZL$ is generalized-primitively-generated.
\end{proof}

\begin{theorem}\label{mm thm for left-sided bialgs}
Let $G$ be a {\em linear} commutative monoid, and let $k$ be a field of characteristic zero. Write $\TSusp_G\Lie(k)$ for the category of torsion $G$-suspensive Lie $k$-algebras, and write $\LeftSided\GPGen\Rig_G\Bialg(k)$ for the category of left-sided generalized-primitively-generated $G$-rigid $k$-bialgebras. 
Then the functors 
\begin{align*} GP_*: \LeftSided\GPGen\Rig_G\Bialg(k) &\rightarrow \TSusp_G\Lie(k)\mbox{\ \ \ \ and} \\
 Z: \TSusp_G\Lie(k) &\rightarrow \LeftSided\GPGen\Rig_G\Bialg(k)\end{align*}
are mutually inverse. Consequently the category of torsion $G$-suspensive Lie $k$-algebras is equivalent to the category of left-sided generalized-primitively-generated $G$-rigid $k$-bialgebras.
\end{theorem}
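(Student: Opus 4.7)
The plan is to establish an adjunction $Z \dashv GP_*$ between $\Susp_G\Lie(k)$ and $\LeftSided\Rig_G\Bialg(k)$, then to check that both the unit and counit restrict to natural isomorphisms on the torsion/generalized-primitively-generated subcategories in the style of the proof of Theorem~\ref{mm thm char 0}. First I would verify the functors land where claimed: by Proposition~\ref{ZL is left-sided and GPG} (using the linearity of $G$), $Z$ takes values in $\LeftSided\GPGen\Rig_G\Bialg(k)$, and by Proposition~\ref{left-sided products of grouplikes and gps}(1), $GP_*$ takes values in $\TSusp_G\Lie(k)$. For the adjunction, given a morphism $f \colon L \to GP_*(A)$ with $A$ left-sided, I would use $W \dashv GP_*$ (Proposition~\ref{W and GP adjunction}) to extend $f$ to $\tilde{f} \colon WL \to A$; left-sidedness makes $\tilde{f}$ vanish on every divisible product $\ell_1 \ell_2$ in the ideal $I$ of Definition~\ref{def of ZL}, and Proposition~\ref{left-sided products of grouplikes and gps}(1) makes $\tilde{f}$ vanish on every generator $Q\ell$, so $\tilde{f}$ descends uniquely through $ZL \to A$.

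Next I would show the unit $\eta_L \colon L \to GP_*(ZL)$ is an isomorphism whenever $L$ is a torsion $G$-suspensive Lie $k$-algebra. Surjectivity follows from combining the Lie-filtration collapse in Lemma~\ref{wl to zl is surj on gp} with the coproduct bookkeeping from the proof of Theorem~\ref{mm thm char 0}: every generalized primitive of $ZL$ lies in $\tilde{F}_1 ZL$ and is therefore of the form $\sum_g \alpha_g g + \bar{\ell}$ for $\bar{\ell}$ the image of some $\ell \in L$, and matching coefficients in $\Delta(x) = Q \otimes x + x \otimes Q$ forces every $\alpha_g = 0$, placing $x$ in the image of $L$. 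For injectivity I would pass to the associated graded of the Lie filtration, use Proposition~\ref{PBW-like lemma 2} to identify $\SLieFilt WL \cong S_{kG}(L^{\tilde{\#}})$, and then use the linearity of $G$ to argue that in $\SLieFilt ZL$ the positive symmetric powers are wiped out by the images of the generators $\ell_1 \ell_2$, while in Lie degree one the torsion hypothesis together with $kG$-bilinearity ensures that the remaining generators $Q \cdot \ell$ of $I$ already vanish in the image of $L$. This identifies $\SLieFilt ZL$ with $kG \oplus L$ in Lie degrees zero and one, so $L$ embeds into $\tilde{F}_1 ZL$, and hence into $GP_*(ZL)$.

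Finally, the counit $\epsilon_A \colon Z(GP_*(A)) \to A$ is surjective because $A$ is generalized-primitively-generated; applying $GP_*$ makes it the inverse of the unit $\eta_{GP_*(A)}$, hence an isomorphism, and Proposition~\ref{GP and injectivity}---which applies because $Z(GP_*(A))$ is itself generalized-primitively-generated by Proposition~\ref{ZL is left-sided and GPG}---upgrades this to injectivity of $\epsilon_A$ itself. The hard part throughout will be the injectivity of $\eta_L$: the ideal $I \subseteq WL$ contains, besides its Lie-degree-two generators, various Lie-degree-one elements that appear only after expanding iterated products and repeatedly applying the identity $\ell_i \ell_j = \ell_j \ell_i + [\ell_i, \ell_j]$, and the essential content of the argument will be a precise control of this Lie-degree-one content---via the simultaneous use of the torsion hypothesis, characteristic zero (to clear symmetric factors arising in coproducts of products), and linearity of $G$ (to turn every product of two generalized primitives into either zero or a Lie bracket)---to ensure that no nonzero element of $L$ is accidentally killed in the passage from $WL$ to $ZL$.
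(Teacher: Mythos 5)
Your proposal is correct and follows essentially the same route as the paper: injectivity of the unit via the associated graded of the Lie filtration and Proposition \ref{PBW-like lemma 2}, surjectivity via the filtration collapse of Lemma \ref{wl to zl is surj on gp}, and the counit handled by surjectivity from generalized-primitive generation plus Proposition \ref{GP and injectivity}. The only difference is presentational: you make the adjunction $Z\dashv GP_*$ and the descent of $\tilde{f}$ through the ideal $I$ explicit, whereas the paper simply invokes the universal property of $Z(GP_*A)$ as the free left-sided rigid bialgebra on $GP_*A$; this is the same mechanism.
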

\begin{proof}
Given a $G$-suspensive Lie $k$-algebra $L$, we have the composite map of $k$-bialgebras $L \hookrightarrow GP_*(WL) \rightarrow GP_*(ZL)$. 
Write $f$ for this composite map $L\rightarrow GP_*(ZL)$. If $L$ is torsion, then we claim that $f$ is an isomorphism. The proof is as follows:
\begin{description}
\item[Injectivity of $f$] 
We have the commutative diagram of $k$-vector spaces
\[\xymatrix{
 L \ar[r] 
  & \LieFilt UL \ar[r]\ar[d]^{\cong} & \SLieFilt WL \ar[d]^{\cong} \ar[r]& \SLieFilt ZL \ar[d] \\
  & \LieFilt U(L^{\#}) \ar[r]\ar[d]^{\cong} & \SLieFilt W(L^{\tilde{\#}}) \ar[d]^{\cong} \ar[r] & \SLieFilt Z(L^{\tilde{\#}}) \ar[d]^{\cong} \\
  & S_k(L^{\#}) \ar[r] & S_{kG}(L^{\tilde{\#}}) \ar[r] & kG\oplus L^{\tilde{\#}}.
}\]
The composite $L \rightarrow kG\oplus L^{\tilde{\#}}$ is an isomorphism of $k$-vector spaces onto Lie degree $1$ in $kG\oplus L^{\tilde{\#}}$. So $L\rightarrow \SLieFilt ZL$ composed with a map $\SLieFilt ZL \rightarrow kG\oplus L^{\tilde{\#}}$ is injective, so  $L\rightarrow \SLieFilt ZL$ is injective, and consequently $L\rightarrow ZL$ is injective.
\item[Surjectivity of $f$]  
This is a consequence of the argument given in the proof of Lemma \ref{wl to zl is surj on gp}.
\end{description}
So $Z: \Susp_G\Lie(k)\rightarrow \LeftSided\GPGen\Rig_G\Bialg(k)$ is right inverse to $GP_*$.

Now suppose that $A$ is a left-sided $G$-rigid $k$-bialgebra. Then $Z(GP_*A)$ is the free left-sided $G$-rigid $k$-bialgebra on the $G$-suspensive Lie $k$-algebra $GP_*A$, so we have a canonical map $g: Z(GP_*A)\rightarrow A$. If $A$ is furthermore assumed to be generalized-primitively-generated, then $Z(GP_*A)$ is the free left-sided rigid bialgebra on a set of generators for $A$, so $g: Z(GP_*A)\rightarrow A$ is surjective. Since $GP_*(g): GP_*(Z(GP_*A))\rightarrow GP_*(A)$ is an isomorphism by the previous part of this theorem, it is in particular injective, so by Proposition \ref{GP and injectivity}, $g$ itself is injective. So $g$ is an isomorphism. So $Z: \Susp_G\Lie(k)\rightarrow \LeftSided\GPGen\Rig_G\Bialg(k)$ is also left inverse to $GP_*$.
\end{proof}

\appendix
\section{Review of the Dyer-Lashof algebra.}
\label{Review of the Dyer-Lashof algebra.}
\sloppy

The material in this appendix is classical; see Theorem 1.1 of Steinberger's chapter ``Homology operations for $H_{\infty}$ and $H_n$ ring spectra'' in \cite{MR836132}, or the first chapter of \cite{MR0436146}, for example.

We recall a presentation for the Dyer-Lashof algebra. If $p = 2$, let $R(-\infty)$ denote the free associative graded $\mathbb{F}_2$-algebra on generators $Q^0, Q^1, Q^2, \dots$, with $Q^i$ in grading degree $i$, modulo the Adem relation
\begin{align*}
    Q^r Q^s &= \sum_i \binom{i-s-1}{2i-r} Q^{r+s-i} Q^i
\end{align*}
for all $r > 2s$.

For an odd prime $p$, let $R(-\infty)$ denote instead the free associative graded $\mathbb{F}_p$ algebra on generators $Q^0, Q^1, Q^2, \ldots$ and $\beta Q^0, \beta Q^1, \beta Q^2, \ldots$ with $Q^i$ in grading degree $2i(p-1)$ and $\beta Q^i$ in grading degree $2i(p-1)-1$, modulo the Adem relations
\begin{align*}
  Q^r Q^s &= \sum_i (-1)^{r+i} \binom{pi-(p-1)s-i-1}{pi-r} Q^{r+s-i} Q^i, \\
  Q^r \beta Q^s &= \sum_i (-1)^{r+i} \binom{pi-(p-1)s-i}{pi-r} \beta Q^{r+s-i} Q^i \\
  &-\sum_i (-1)^{r+i} \binom{pi-(p-1)s-i-1}{pi-r-1} Q^{r+s-i} \beta Q^i, \\
  \beta Q^r\beta Q^s &= -\sum_i (-1)^{r+i} \binom{pi-(p-1)s-i-1}{pi-r-1} \beta Q^{r+s-i} \beta Q^i.
\end{align*}
for all $r > ps$. 

When $p=2$, a useful notational convention which is sometimes used (e.g. in \cite{MR711048}) is to write
\begin{itemize}
\item $Q^{r}$ rather than $Q^{2r}$ for the generator for $R(-\infty)$ in degree $2r$, and
\item $\beta Q^{r}$ rather than $Q^{2r-1}$ for the generator for $R(-\infty)$ in degree $2r-1$. 
\end{itemize}
With these conventions, the Adem relations and degrees for the generators of $R(-\infty)$ at the prime $2$ are the same as the Adem relations and degrees for the generators of $R(-\infty)$ at odd primes.

Let $\beta^{\epsilon_1} Q^{i_1} \beta^{\epsilon_2} Q^{i_2} \cdots \beta^{\epsilon_d} Q^{i_d}$ be a monomial in this free associative graded algebra. 
The {\em excess} of this monomial is defined to be $i_1 - \sum_{j=2}^d i_j$ when $p = 2$ and
\[ 2i_1 - \epsilon_1 - \sum_{j=2}^d \left(2 i_j (p-1) - \epsilon_j \right) \]
when $p$ is odd. For an integer $e$, let $J_e$ be the two-sided ideal of $R(-\infty)$ generated by all monomials of excess $< e$. We let $R(e) = R(-\infty)/J_e$. The special case $e=0$ is called the \emph{Dyer-Lashof algebra}. We often write $R$ for $R(0)$.

The coproduct and augmentation on $R$ are given by
\begin{align*}
 \Delta(Q^n) &= \sum_{j=0}^n Q^j\otimes Q^{n-j},\\
 \Delta(\beta Q^{n+1}) &= \sum_{j=0}^n \left( \beta Q^{j+1}\otimes Q^{n-j} + \beta Q^{j}\otimes Q^{n+1-j}\right),\\
 \epsilon(Q^0) &= 1,\\
 \epsilon(Q^n) &= 0\mbox{\ \ if\ } n>0.
\end{align*}
A nice reference for the coproduct is Theorem 2.3 in the first chapter of \cite{MR0436146}. Since $Q^0$ is a grouplike but not invertible, one sees immediately that $R$ cannot be a Hopf algebra.

\bibliography{salch}{}
\bibliographystyle{plain}
\end{document}